\numberwithin{figure}{section}
\numberwithin{table}{section}
\newcommand{\bequ}{\begin{equation}}     \newcommand{\eequ}{\end{equation}}
\newcommand{\benn}{\begin{equation*}}    \newcommand{\eenn}{\end{equation*}}
\newcommand{\bbma}{\begin{bmatrix}}      \newcommand{\ebma}{\end{bmatrix}}
\newcommand{\bsub}{\begin{subequations}}
	\newcommand{\esub}{\end{subequations}}
\newtheorem{thm}{Theorem}[section]
\newtheorem{lem}[thm]{Lemma}
\newtheorem{cor}[thm]{Corollary}
\newtheorem{assum}[thm]{Assumptions}
\numberwithin{equation}{section}
\newcommand{\comment}[1]{}
\newcommand{\be}{\begin{equation}}
\newcommand{\ee}{\end{equation}}
\newcommand{\bea}{\begin{eqnarray}}
\newcommand{\eea}{\end{eqnarray}}
\newcommand{\beqa}{\begin{eqnarray}}
\newcommand{\eeqa}{\end{eqnarray}}
\newcommand{\beann}{\begin{eqnarray*}}
	\newcommand{\eeann}{\end{eqnarray*}}
\newcommand{\bmat}{\left[ \begin{array}}
	\newcommand{\emat}{\end{array} \right]}
\newcommand{\beq}{\begin{equation}}
\newcommand{\eeq}{\end{equation}}
\newcommand{\bproof}{\begin{description} \item[{\it Proof}.] ~ }
	\newcommand{\eproof}{\hspace*{\fill}$\Box$\medskip \end{description}}
\newcommand{\defeq}{\stackrel{\rm def}{=}}
\newcounter{algo}[section]
		\newcounter{prog}[section]
\title{Analysis of the BFGS Method with Errors}
\author{       
        Yuchen Xie\thanks{Department of Industrial Engineering and Management Sciences, Northwestern University, 
       Evanston, IL, USA.  This author was supported by the Office of Naval Research grant N00014-14-1-0313 P00003, and by National Science Foundation grant DMS-1620022.}
       \and     
       Richard Byrd \thanks{Department of Computer Science, University of Colorado,
        Boulder, CO, USA.  This author was supported by National Science Foundation grant DMS-1620070.}
        \and
       Jorge Nocedal \thanks{Department of Industrial Engineering and Management Sciences, Northwestern University, 
       Evanston, IL, USA.  This author was supported by the Defense Advanced Research Projects Agency (DARPA). The views, opinions and/or findings expressed are those of the author and should not be interpreted as representing the official views or policies of the Department of Defense or the U.S. Government.} 
      }
\date{\today}
\begin{document}

\maketitle

\begin{abstract}The classical convergence analysis of quasi-Newton methods assumes that the function and gradients employed at each iteration are exact. In this paper, we consider the case when there are (bounded) errors in both computations and establish conditions under which a slight modification of the BFGS algorithm with an  Armijo-Wolfe  line search converges to a neighborhood of the solution that is determined by the size of the errors. One of our results is an extension of the analysis presented in \cite{ByNoTool}, which establishes that, for strongly convex functions, a fraction of the BFGS iterates are \emph{good iterates}. We present numerical results illustrating the performance of the new BFGS method in the presence of noise.
\end{abstract}



\section{Introduction} \label{sec:intro}
The behavior of the BFGS method in the presence of errors has received little attention in the literature. There is, however, an increasing interest in understanding its theoretical properties and practical performance when functions and gradients are inaccurate. This interest is driven by applications where the objective function contains noise, as is the case in machine learning, and in applications where the function evaluation is a simulation subject to computational errors. 
The goal of this paper is to extend the  theory of quasi-Newton methods to the case when there are  errors in the function and gradient evaluations. We analyze the classical BFGS method with a slight modification consisting of lengthening the differencing interval as needed; all other aspects of the algorithm, including the line search, are unchanged.  We establish global convergence properties on strongly convex functions. Specifically, we show that if the errors in the function and gradient are bounded, the iterates converge to a neighborhood of the solution whose size depends on the level of noise (or error).

Our analysis builds upon the results in \cite{ByNoTool}, which identify some fundamental properties of BFGS updating. The extension to the case of inaccurate gradients is not simple due to the complex nature of the quasi-Newton iteration, where the step affects the Hessian update, and vice versa, and where the line search plays an essential role. The existing analysis relies on the observation that changes in gradients provide reliable curvature estimates, and on the fact that the line search makes decisions based on the true objective function. In the presence of errors, gradient differences can give misleading information and result in poor quasi-Newton updates. Performance can  further be impaired by the confusing effects of a line search based on inaccurate function information. We show that these  difficulties can be overcome by our modified BFGS algorithm, which performs efficiently until it reaches a neighborhood of the solution where progress is no longer possible due to errors. 

The proposed algorithm aims to be a natural adaptation of the BFGS method that is capable of dealing with noise. Other ways of achieving robustness might include update skipping and modifications of the curvature vectors, such as Powell damping \cite{Powe78}. We view these as less desirable alternatives for reasons discussed in the next section.  The line search strategy could also be performed in other ways. For example, in their analysis of a gradient method, Berahas et al.~\cite{berahas2018derivative}, relax the Armijo conditions to take noise into account. We prefer to retain the standard Armijo-Wolfe line search without any modification, as this has practical advantages.

The literature of the BFGS method with inaccurate gradients includes the implicit filtering method of Kelley et al.~\cite{choi2000superlinear, kelley2011implicit}, which assumes that noise can be diminished at will at any iteration. Deterministic convergence guarantees have been established for that method by ensuring that noise decays as the iterates approach the solution. Dennis and Walker \cite{DennWalker} and Ypma \cite{ypma} study bounded deterioration properties, and local convergence, of quasi-Newton methods with errors, when started near the solution with a Hessian approximation that is close to the exact Hessian.  
Barton \cite{barton1992computing} proposes an implementation of the BFGS method in which gradients are computed by an appropriate finite differencing technique, assuming that the noise level in the function evaluation is known. Berahas et al.~\cite{berahas2018derivative}  estimate the noise in the function using Hamming's finite difference technique \cite{hamming2012introduction}, as extended by Mor\'e and Wild \cite{more2011estimating}, and employ this estimate to compute a finite difference gradient in the BFGS method. They analyze a gradient method with a relaxation of the Armijo condition, and do not study the effects of noise in BFGS updating. 

There has recently been some interest in designing quasi-Newton methods for machine learning applications using stochastic approximations to the gradient \cite{byrd2016stochastic,gower2016stochastic,moritz2016linearly,schraudolph2007stochastic}. These papers avoid potential difficulties with BFGS or L-BFGS updating by assuming that the quality of gradient differences is always controlled, and as a result, the analysis follows similar lines as for classical BFGS and L-BFGS.

This paper is organized in 5 sections. The proposed algorithm is described in Section~\ref{sec:alg}. Section~\ref{sec:converge}, the bulk of the paper,  presents a sequence of lemmas related to the existence of  stepsizes that satisfy the Armijo-Wolfe conditions, the beneficial effect of lengthening the differencing interval, the properties of ``good iterates'', culminating in a global convergence result. Some numerical tests that illustrate the performance of the method with errors in the objective function and gradient are given in Section~\ref{sec:numerical}. The paper concludes in Section~\ref{sec:final} with some final remarks.  

\section{The Algorithm}   \label{sec:alg} 

We are interested in solving the  problem
\eqals{
	\min_{x \in \mb{R}^d} ~ \phi(x),
}
where the function  $\phi \in C^1$ and its gradient $\nabla \phi$  are not directly accessible. 
Instead, we have access to inaccurate (or noisy) versions, which we denote as $f(x)$ and $g(x)$, respectively. Thus, we write
\eqal{
        \label{fnoise}
	f(x) & = \phi(x) + \epsilon(x) \\
	g(x) & = \nabla \phi(x) + e(x),
}
where $\epsilon(x)$ and $e(x)$ define the error in function and gradient values. 
To apply the BFGS method, or a modification of it, to minimize the true function $\phi$, while observing only noisy function and gradient estimates, we must give careful consideration to the two main building blocks of the BFGS method: the line search and Hessian updating procedures.

As was shown by Powell \cite{Powe766}, an Armijo-Wolfe line search guarantees  the stability of the BFGS updating procedure, and ultimately the global convergence of the iteration (for  convex objectives). In the deterministic  case, when the smooth function $\phi(x)$ and its gradient are available,  this line search computes a stepsize $\alpha$ that satisfies:
\eqal{
	\label{eq_aw_condition}
	\phi(x + \alpha p) & \leq \phi(x) + c_1 \alpha p^T \nabla \phi(x) \qquad\mbox{ (Armijo condition)} \\
	p^T \nabla \phi(x + \alpha p)  & \geq c_2 p^T \nabla \phi(x), ~~~~~~~~~~~\qquad\mbox{(Wolfe condition)}
}
where $x$ is the current iterate, $p$ is a descent direction for $\phi$ at $x$,  (i.e., $p^T \nabla \phi(x) < 0$), and $0 < c_1 < c_2 < 1$ are user-specified parameters.  The first condition imposes sufficient decrease in the objective function, and the second requires an increase in the directional derivative (and is sometimes referred to as the \emph{curvature condition}). 
It is well known \cite{mybook} that if $\phi \in C^1$ is bounded below and has Lipschitz continuous gradients, there exists an interval of steplengths $\alpha$ that satisfy \eqref{eq_aw_condition}. 

When $\phi(x)$ and $\nabla \phi(x)$ are not accessible, it is natural to attempt to satisfy the Armijo-Wolfe conditions for the noisy function and gradient, i.e., to find $\alpha >0$ such that
\eqal{
	\label{eq_aw_condition_noised}
	f(x + \alpha p) & \leq f(x) + c_1 \alpha p^T g(x) \\
	p^T g(x + \alpha p)  & \geq c_2 p^T g(x),
}
where $p$ is the BFGS search direction. It is, however, not immediately clear whether such a stepsize  exists, and if it does,   whether it satisfies the Armijo-Wolfe conditions \eqref{eq_aw_condition} for true function $\phi$.

One possible approach to address these two challenges is to relax the Armijo-Wolfe conditions \eqref{eq_aw_condition_noised}, as is done e.g.~by Berahas et al.~\cite{berahas2018derivative} in their analysis of a gradient method with errors.  An alternative, which we adopt in this paper, is to keep the Armijo-Wolfe conditions unchanged, and show that under suitable conditions there is a stepsize that satisfies the Armijo-Wolfe  conditions for both the noisy and true objective functions. Our main assumption is that the errors $\epsilon(x), e(x)$ in \eqref{fnoise} are bounded for all $x$.
 
 Let us now consider the BFGS updating procedure. The key in the convergence analysis of quasi-Newton methods is to show that the search direction is not orthogonal to the gradient. In the literature on Newton-type methods, this is usually done by bounding the condition number of the Hessian approximation $B_k$. Whereas this  is possible for limited memory quasi-Newton methods, such as L-BFGS, in which $B_k$ is obtained by performing a  limited number of updates, one cannot bound the condition number of $B_k$ for the standard BFGS method without first proving that the iterates converge to the solution. Nevertheless, there is a result about BFGS updating  \cite{ByNoTool}, for strongly convex objective functions, whose generality will be crucial in our analysis. It states for a fixed \emph{fraction} of the BFGS iterates, the angle between the search direction and the gradient is bounded away from $90^\circ$.

To apply the results in \cite{ByNoTool}, we need to ensure that the update of $B_k$ is performed using the correction pairs 
$$[s_k, y_k]= [(x_{k+1}-x_k), (\nabla f(x_{k+1}) - \nabla f(x_k))]$$ 
that  satisfy, for all $k$, 
\eqal{
	\label{twom}
	\frac{y_k^Ts_k}{s_k^T s_k}  \geq \widehat m \cm \qquad \frac{y_k^T y_k}{y_k^T s_k} & \leq \widehat M,
}
for some constants $0 < \widehat  m \leq \widehat M$. The Armijo-Wolfe line search does not, however, guarantee that these conditions are satisfied in our setting, even under the assumption that $\phi$ is strongly convex.  To see this, note that when $\nrm{}{s_k}$ is small compared to the gradient error $\epsilon_g$, the vector $y_k$ can be contaminated by errors, and \eqref{twom} may not hold. In other words, difficulties arise when the differencing interval is too short, and to overcome this problem, we modify the ordinary BFGS method by {lengthening} the differencing interval, as needed. How to do this will be discussed in the next section.

With these ingredients in place, we provide in Algorithm~\ref{algo1} a  description of the method.  In what follows, we let $H_k$ denote the inverse Hessian approximation; i.e, $H_k = B_k^{-1}$. 
\bigskip


\begin{algorithm}[H]              
    \caption{Outline of the BFGS Method with Errors}
	\label{algo1}
	\begin{algorithmic}[1]
		\Statex \textbf{Input:} functions $f(\cdot)$ and $ g(\cdot)$; constants $0 <c_1<c_2<1$; lengthening parameter $l >0$; starting point $x_0 $; initial Hessian inverse approximation $H_0 \succ 0$.
		\For{$k = 0, 1,2,...,$}
		\State{$p_k \gets -H_k g(x_k)$}
				\State{Attempt to find a stepsize $\alpha^*$ such that
			    	\eqals{
			    		f(x_k + \alpha^* p_k) & \leq f(x_k) + c_1 \alpha^* p_k^T g(x_k) \\
			    		p_k^T g(x_k + \alpha^* p_k) & \geq c_2 p_k^T g(x_k)
					}
			}
			\If{Succeeded}
				\State{$\alpha_k \gets \alpha^*$}
			\Else
				\State{$\alpha_k \gets 0$}
			\EndIf
			\If{$\nrm{}{\alpha_k p_k} \geq l$}
				\State{Compute the  curvature pair as usual:
				            \eqals{
				                s_k \gets \alpha_k p_k,\quad y_k \gets g\bpa{x_k + s_k} - g(x_k)
							}
						}
			\Else
				\State{Compute the curvature pair by {lengthening} the search direction:\label{algo_line_lengthening}
				            \eqals{
				                s_k \gets l  \frac{p_k}{\nrm{}{p_k}},\quad y_k \gets g\bpa{x_k + s_k} - g(x_k)
							}
					}
			\EndIf
			\State{Update inverse Hessian approximation using the curvature pairs $(s_k, y_k)$: 
				\eqal{
					\label{eq_bfgs_update}
					H_{k+1} = \bpa{I - \rho_k s_k y_k^T}H_k\bpa{I - \rho_k y_k s_k^T} + \rho_k s_k s_k^T \cm \text{where} ~ \rho_k = \frac{1}{s_k^T y_k}
				}
			}
			\State{$x_{k+1} \gets x_k + \alpha_k {p}_k$}
		\EndFor
	\end{algorithmic}
\end{algorithm}

\bigskip
The only unspecified parameter in this algorithm is the lengthening parameter $l$, whose choice will be studied in the next section. We note for now that $l$ needs only be large enough to compensate for the error in the gradient, and should be at least of order $ O(\epsilon_g)$. Even though step~\ref{algo_line_lengthening} is executed when the line search fails, we will show below that the lengthening operation guarantees that $s_k^Ty_k >0$ so that the BFGS update is well defined. We also note that step~\ref{algo_line_lengthening} requires an additional gradient evaluation.

As mentioned in Section~\ref{sec:intro}, lengthening the step is not the only way to stabilize the BFGS update in the presence of errors. One alternative is to skip the update, but this can prevent the algorithm from building a useful Hessian approximation.  One can also modify the curvature vector $y_k$ when the stability of the BFGS updating cannot be guaranteed, but it is difficult to know how to design this modification in the presence of noise in the function and gradient.  We choose the lengthening approach because we view it as well suited in the presence of noise.

\section{Convergence Analysis}   \label{sec:converge}
In this section, we give conditions under which the BFGS method outlined above is guaranteed to yield an acceptable solution by which we mean a function value that is within the level of noise of the problem.
Throughout the paper, $\nrm{}{\cdot}$ denotes the $\ell_2$ norm.

Our analysis relies on the following assumptions regarding the  true objective function $\phi$ and the errors in function and gradients. 

\begin{assum}
\label{ass_Lip_grad_bounded_below}
The function $\phi(x)$ is bounded below and is {twice} continuously differentiable with an $M$-Lipschitz continuous {($M > 0$)} gradient, i.e.,
\eqals{
	\nrm{}{\nabla \phi(x) - \nabla \phi(y)} \leq M \nrm{}{x-y} \cm \forall x, y \in \mb{R}^d .
}
\end{assum}
\noindent
This assumption could be relaxed to require only that the gradients be Lipschitz continuous; we make the stronger assumption that $\phi \in {C}^2$ only to simplify the proof of one of the lemmas below.

\begin{assum}
\label{ass_bounded_noise}
The errors in function and gradients values are uniformly bounded, i.e., $\forall x \in \mb{R}^d$, there exist {non-negative} constants $\epsilon_f, \epsilon_g$ such that
\eqals{   
	| f(x)-  \phi(x)|= \abs{\epsilon(x)} & \leq \epsilon_f \\
	\| g(x) - \nabla \phi(x) \| = \nrm{}{e(x)} & \leq \epsilon_g.
}
\end{assum}

There are many applications where this assumption holds; one of the most prominent is the case of computational noise that arises when the evaluation of the objective function involves an adaptive numerical computation \cite{more2011estimating}. On the other hand, there are other applications where Assumption~\ref{ass_bounded_noise} is not satisfied, as is the case when errors are due to Gaussian noise. Nevertheless, since the analysis for  unbounded errors appears to be complex \cite{paquette}, we will not consider it  here, as our main goal is to advance our understanding of the BFGS method in the presence of errors, and this is best done, at first, in a  benign setting.

\subsection{Existence of  Armijo-Wolfe Stepsizes}
We  begin our analysis by presenting a result that will help us establish the existence of stepsizes satisfying the Armijo-Wolfe conditions.  Since we will impose these conditions on  the noisy functions (i.e. \eqref{eq_aw_condition_noised}) and want to show that they also apply to the true function, the following lemma  considers two sets of functions and gradients:  $F_A$ and $G_A$ can be viewed as proxies for the true function and gradient $\phi$ and $\nabla \phi$, while $F_B$ and $G_B$ stand for the approximate function $f$ and its gradient approximation {$g$}. (In a later lemma these roles are reversed.)
It is intuitively clear, that the Armijo-Wolfe conditions can only be meaningful when the gradients are not dominated by  errors. Therefore, our first lemma shows that when the gradients $G_A, G_B$ are sufficiently {large compared to}  $\epsilon_f, \epsilon_g$,  the Armijo-Wolfe conditions can be satisfied.

Below, we let $\varphi$ denote the angle between a vector $p \in \mb{R}^d$ and a vector $-G \in \mb{R}^d$, i.e.,
\eqal{ \label{cosdef}
  \varphi = \angle (p, -G) \quad\mbox{or} \quad \cos \varphi = \frac{-p^T G}{\|p\|\|G\|}.
   }
   In the sequel, $\varphi_A, \varphi_B$ denote the angles obtained by substituting $G_A, G_B$ in this definition.

\begin{lem}
\label{lem_aw_with_noise}
Suppose that a scalar function $F_A: \mb{R}^d \to \mb{R}$ is continuous and bounded below, and that a vector function $G_A: \mb{R}^d \to \mb{R}^d$ satisfies 
\eqal{
	\label{eq_thm_ass_quasi_lip}
	\nrm{}{G_A(y) - G_A(z)} \leq L \nrm{}{y-z} + \Lambda \cm \quad \forall y, z \in \mb{R}^d \cm
}
for some constants {$L > 0, \Lambda \geq 0$}.
Suppose $x \in \mb{R}^d$ is such that $G_A(x) \neq 0$,  that $p \in \mb{R}^d$ satisfies $p^T G_A(x) < 0$, and  that the stepsize $\alpha > 0$ satisfies the Armijo-Wolfe conditions
\eqal{
	\label{eq_thm_ass_aw_cond}
	F_A(x + \alpha p) & \leq F_A(x) + c_{A1} \alpha p^T G_A(x) \\
	p^T G_A(x + \alpha p) & \geq c_{A2} p^T G_A(x),
}
for $0< c_{A1}<c_{A2}<1$. 
Furthermore, suppose another scalar function $F_B: \mb{R}^d \to \mb{R}$ and vector function $G_B: \mb{R}^d \to \mb{R}^d$ satisfy
\eqal{
	\label{eq_thm_ass_diff}
	\abs{F_A(y) - F_B(y)} \leq & \epsilon_f \cm \forall y \in \mb{R}^d\\
	\nrm{}{G_A(y) - G_B(y)} \leq & \epsilon_g \cm \forall y\in \mb{R}^d ,
}
for some non-negative constants $\epsilon_f, \epsilon_g$.
Assume that $G_B(x) \neq 0$ and that $p$ satisfies $p^T G_B(x) < 0$. 
Let $\gamma_1, \gamma_2$ be two constants such that
\begin{equation} \label{gammas}
    0 < \gamma_1 < c_{A1} \quad\mbox{and} \quad 0 < \gamma_2 < 1 - c_{A2}.
\end{equation} 
If the following conditions hold:
\eqal{ 
	\label{3cond}
	\nrm{}{G_A(x)} & \geq {\frac{2\Lambda}{(1-c_{A2}) \cos \varphi_A}} \\
	\nrm{}{G_B(x)} & \geq \max \bst{\frac{2 c_{A1} \epsilon_g}{\gamma_1 \cos \varphi_B }\cm \frac{(1+c_{A2})\epsilon_g}{\gamma_2 \cos \varphi_B}} \\
	\nrm{}{G_A(x)}  \nrm{}{G_B(x)} & \geq {\frac{8 L \epsilon_f}{\gamma_1 (1-c_{A2}) \cos \varphi_A \cos \varphi_B}} ,
}
then the stepsize $\alpha$ satisfies the Armijo-Wolfe conditions with respect to $F_B$ and $G_B$:
\begin{align}
	F_B(x + \alpha p) & \leq F_B(x) + (c_{A1} - \gamma_1) \alpha p^T G_B(x)   \label{tray1} \\
	p^T G_B(x + \alpha p) & \geq (c_{A2} + \gamma_2) p^T G_B(x).   \label{tray2}
\end{align}
\end{lem}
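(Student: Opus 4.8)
The plan is to establish the two inequalities \eqref{tray1} and \eqref{tray2} separately, in each case starting from the corresponding Armijo-Wolfe inequality for $(F_A,G_A)$ in \eqref{eq_thm_ass_aw_cond} and absorbing the discrepancies into the relaxed constants $\gamma_1,\gamma_2$ by means of the bounds \eqref{eq_thm_ass_diff}. Throughout I would use the identity $-p^T G(x) = \|p\|\,\|G(x)\|\cos\varphi$ from \eqref{cosdef} to turn directional derivatives into the geometric quantities appearing in \eqref{3cond}, together with the elementary estimate $|p^T(G_A(x)-G_B(x))| \le \|p\|\epsilon_g$.

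I would dispatch the Wolfe inequality \eqref{tray2} first, since it is the easier of the two and needs no information about the step length. Starting from $p^T G_A(x+\alpha p) \ge c_{A2}\,p^T G_A(x)$, I replace $G_A$ by $G_B$ both in the evaluated gradient and in the reference gradient, collecting the two errors into a single term $(1+c_{A2})\|p\|\epsilon_g$, so that $p^T G_B(x+\alpha p) \ge c_{A2}\,p^T G_B(x) - (1+c_{A2})\|p\|\epsilon_g$. Demanding that this error be dominated by $\gamma_2(-p^T G_B(x)) = \gamma_2\|p\|\,\|G_B(x)\|\cos\varphi_B$ reduces exactly to the requirement $\|G_B(x)\| \ge (1+c_{A2})\epsilon_g/(\gamma_2\cos\varphi_B)$, the second entry in the middle line of \eqref{3cond}.

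For the Armijo inequality \eqref{tray1}, I would first pass from $F_A$ to $F_B$ at the two points $x$ and $x+\alpha p$ using $|F_A-F_B|\le\epsilon_f$, picking up $2\epsilon_f$, and then convert $p^T G_A(x)$ to $p^T G_B(x)$, picking up $c_{A1}\alpha\|p\|\epsilon_g$. The goal becomes to bound the total error $c_{A1}\alpha\|p\|\epsilon_g + 2\epsilon_f$ by $\gamma_1(-\alpha p^T G_B(x)) = \gamma_1\alpha\|p\|\,\|G_B(x)\|\cos\varphi_B$. Splitting this budget in half, the first term yields the condition $\|G_B(x)\| \ge 2c_{A1}\epsilon_g/(\gamma_1\cos\varphi_B)$, the first entry in the middle line of \eqref{3cond}.

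The term $2\epsilon_f$ is the crux, and the main obstacle, because controlling it requires a lower bound on the step length $\alpha\|p\|$, which the Armijo condition alone does not provide. I would obtain such a bound from the Wolfe condition for $(F_A,G_A)$: subtracting $p^T G_A(x)$ from both sides gives $p^T(G_A(x+\alpha p)-G_A(x)) \ge (1-c_{A2})\|p\|\,\|G_A(x)\|\cos\varphi_A$, while the quasi-Lipschitz bound \eqref{eq_thm_ass_quasi_lip} bounds the left side above by $\|p\|(L\alpha\|p\|+\Lambda)$, so that $L\alpha\|p\| \ge (1-c_{A2})\|G_A(x)\|\cos\varphi_A - \Lambda$. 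Here the first line of \eqref{3cond}, namely $\|G_A(x)\| \ge 2\Lambda/((1-c_{A2})\cos\varphi_A)$, is precisely what is needed to absorb $\Lambda$ and retain half of the leading term, giving $\alpha\|p\| \ge (1-c_{A2})\|G_A(x)\|\cos\varphi_A/(2L)$. Substituting this lower bound into the remaining requirement $2\epsilon_f \le \tfrac12\gamma_1\alpha\|p\|\,\|G_B(x)\|\cos\varphi_B$ produces the product condition $\|G_A(x)\|\,\|G_B(x)\| \ge 8L\epsilon_f/(\gamma_1(1-c_{A2})\cos\varphi_A\cos\varphi_B)$, the last line of \eqref{3cond}, which closes the Armijo inequality and hence the lemma.
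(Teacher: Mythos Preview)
Your proposal is correct and follows essentially the same route as the paper: the paper likewise derives the lower bound $\alpha\|p\| \ge (1-c_{A2})\cos\varphi_A\|G_A(x)\|/(2L)$ from the Wolfe condition for $G_A$, the quasi-Lipschitz bound, and the first line of \eqref{3cond}, uses it together with the product condition to absorb $2\epsilon_f$, and handles the $\epsilon_g$-terms via the middle line of \eqref{3cond}; the only cosmetic difference is that the paper proves \eqref{tray1} before \eqref{tray2}.
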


\begin{proof}
By the second equation in \eqref{eq_thm_ass_aw_cond}, i.e.,
\eqals{
	p^T G_A(x + \alpha p) & \geq c_{A2} p^T G_A(x) \cm
}
we have
\eqals{
	-(1-c_{A2}) p^T G_A(x) \leq p^T \big(G_A(x+ \alpha p \big) - G_A(x)) .
}
Using \eqref{eq_thm_ass_quasi_lip} we have
\eqals{
	-(1-c_{A2}) p^T G_A(x) \leq 
	\nrm{}{p} \bpa{ \alpha L \nrm{}{p} + \Lambda}.
}
Recalling the definition \eqref{cosdef}, we obtain the lower bound
\eqals{
	\alpha \geq  \frac{(1-c_{A2})\cos \varphi_A \nrm{}{G_A(x)} - \Lambda}{L \nrm{}{p}}.
}
From \eqref{3cond} we have
\eqals{
	\nrm{}{G_A(x)} \geq \frac{2\Lambda}{(1-c_{A2}) \cos \varphi_A},
}
i.e.,
\eqals{
	(1-c_{A2}) \cos \varphi_A \nrm{}{G_A(x)} \geq 2 \Lambda ,
}
from which it follows that
\eqals{
	\alpha \geq \underline{\alpha} \defeq \frac{(1-c_{A2})\cos \varphi_A \nrm{}{G_A(x)}}{2L \nrm{}{p}} .
}
Now, by \eqref{3cond} we also have
\eqals{
	\nrm{}{G_A(x)}  \nrm{}{G_B(x)} & \geq {\frac{8 L \epsilon_f}{\gamma_1 (1-c_{A2}) \cos \varphi_A \cos \varphi_B}} ,
}
and thus
\begin{eqnarray}
	 -\gamma_1 {\alpha} p^T G_B(x) 
	& \geq & -\gamma_1 \underline{\alpha} p^T G_B(x) \nonumber \\
	& =  & \gamma_1 \frac{(1-c_{A2})\cos \varphi_A \nrm{}{G_A(x)}}{2L \nrm{}{p}} \nrm{}{p} \nrm{}{G_B(x)} \cos \varphi_B  \nonumber \\
	 & = & \frac{\gamma_1 (1-c_{A2})\cos \varphi_A \cos \varphi_B}{2L } \nrm{}{G_A(x)}\nrm{}{G_B(x)} 
	\geq  4 \epsilon_f . \label{eq_prf_ac_bound_1}
\end{eqnarray}
From \eqref{3cond} 
\eqals{
	\nrm{}{G_B(x)} \geq ~ &\frac{2 c_{A1} \epsilon_g}{\gamma_1 \cos \varphi_B} \cm
}
or 
\eqal{
	\label{eq_prf_ac_bound_2}
	-\gamma_1 {\alpha} p^T G_B(x) \geq 2 c_{A1} \alpha \nrm{}{p} \epsilon_g .
}
Adding \eqref{eq_prf_ac_bound_1} and \eqref{eq_prf_ac_bound_2} yields
\eqal{
	\label{eq_prf_ac_bound_comb}
	-\gamma_1 {\alpha} p^T G_B(x) \geq 2 \epsilon_f + c_{A1} \alpha \nrm{}{p} \epsilon_g.
}
The first inequality in \eqref{eq_thm_ass_aw_cond} and Assumptions~\eqref{eq_thm_ass_diff} give
\eqals{
	F_B(x + \alpha p) & \leq F_B(x) + c_{A1} \alpha p^T G_B(x) + 2 \epsilon_f + c_{A1} \alpha \nrm{}{p} \epsilon_g  \cm
}
which combined with \eqref{eq_prf_ac_bound_comb} yields
\eqal{
	F_B(x + \alpha p) 
	& \leq F_B(x) + (c_{A1} - \gamma_1) \alpha p^T G_B(x) .
}
This proves \eqref{tray1}. 

Next, by \eqref{3cond}
\eqals{
	\nrm{}{G_B(x)} \geq \frac{(1+c_{A2})\epsilon_g}{\gamma_2 \cos \varphi_B} \cm
}
or
\eqal{
	\label{eq_prf_wc_bound_comb}
	-(1+c_{A2}) \epsilon_g \nrm{}{p} \geq \gamma_2 p^T G_B(x) .
}
By the second equation in \eqref{eq_thm_ass_aw_cond} and assumption \eqref{eq_thm_ass_diff} we immediately have
\eqals{
	p^T G_B(x + \alpha p) & \geq c_{A2} p^T G_B(x) - (1+c_{A2}) \epsilon_g \nrm{}{p} .
}
Then by \eqref{eq_prf_wc_bound_comb} we have
\eqals{
	p^T G_B(x + \alpha p) 
	& \geq (c_{A2} + \gamma_2) p^T G_B(x) ,
}
which proves  \eqref{tray2}.
\end{proof}

Note that there is some flexibility in the choice of $\gamma_1, \gamma_2$ in \eqref{gammas}, which influences the constants in \eqref{3cond}. 
This lemma gives conditions under which the Armijo-Wolfe conditions hold, but the  bounds \eqref{3cond}, 
involve the angles $\varphi_A, \varphi_B$, which have not been shown to be bounded away from $90^\circ$ (so that the cosine terms are not bounded away from zero). Hence, this result is preliminary. We continue the analysis leaving the angles $\varphi_A, \varphi_B$ as parameters to be bounded later. 

 In the sequel, we let $g_k = g(x_k)$, define $\theta_k$ to be the angle between $p_k$ and $-g_k$, and $\wt{\theta}_k$  the angle between $p_k$ and $-\nabla \phi(x_k)$, i.e.,
\begin{align}  
\theta_k = \angle (-p_k, g_k) \quad\mbox{or} \quad  \cos(\theta_k)= &  -p_k^Tg_k/\|p_k\| \|g_k\|   \label{twotheta}\\
\wt{\theta}_k = \angle (-p_k, \phi(x_k)) \quad\mbox{or} \quad \cos(\wt{\theta}_k)= &  -p_k^T \nabla \phi(x_k)/\|p_k\| \|\nabla \phi(x_k)\| . \label{twotheta2}
\end{align}
We now use Lemma~\ref{lem_aw_with_noise} to establish the existence of Armijo-Wolfe stepsizes for the noisy function and gradient, $f$ and $g$, under the assumption that the  true gradient $\nabla \phi$ is not too small.

\begin{thm}
\label{thm_existence_of_aw_step}
Suppose that Assumptions \ref{ass_Lip_grad_bounded_below} and \ref{ass_bounded_noise} hold, and that at iteration $k$ the search direction $p_k$ satisfies $p_k^T g_k < 0$. 
Let 
$0 < c_1 < c_2 < 1$ and $0 < \delta_1 < 1$, $0 < \delta_2 < 1$ be constants such that  $\delta_1 + \delta_2 < c_2 - c_1$. 
If
\eqal{ \label{who}
	\nrm{}{\nabla \phi(x_k)}  \geq \max \Bigg \{ & \frac{4 ({c}_1 + \delta_1)\epsilon_g}{\delta_1 \cos \theta_k} \cm \frac{2(1+{c}_2-\delta_2)\epsilon_g}{\delta_2 \cos \theta_k} \cm \\
	& \sqrt{{\frac{16M \epsilon_f}{(1-{c}_2+\delta_2) \delta_1 \cos \theta_k \cos \wt{\theta}_k}}}
	\Bigg\}  ,
}
 there exists a stepsize $\alpha_k$ such that 
\eqal{
	\label{eq_thm_existence_of_aw_step_aw_cond}
	f(x_k + \alpha_k p_k) & \leq f(x_k) + c_1 \alpha_k p_k^T g(x_k) \\
	p_k^T g(x_k + \alpha_k p_k) & \geq c_2 p_k^T g(x_k) .
}
\end{thm}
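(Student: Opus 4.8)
The plan is to derive Theorem~\ref{thm_existence_of_aw_step} from Lemma~\ref{lem_aw_with_noise} by a two-step chaining argument: first invoke the classical existence theory for the \emph{true} objective $\phi$, and then transfer the resulting stepsize to the noisy pair $(f,g)$ through the lemma. Concretely, I would apply Lemma~\ref{lem_aw_with_noise} with the identification $F_A = \phi$, $G_A = \nabla\phi$, $F_B = f$ and $G_B = g$. Assumption~\ref{ass_Lip_grad_bounded_below} then supplies \eqref{eq_thm_ass_quasi_lip} with $L = M$ and $\Lambda = 0$, while Assumption~\ref{ass_bounded_noise} supplies \eqref{eq_thm_ass_diff} with the stated $\epsilon_f,\epsilon_g$; with this identification the angles of \eqref{cosdef} become $\varphi_A = \wt{\theta}_k$ and $\varphi_B = \theta_k$. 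The constants are matched by setting
\[
  c_{A1} = c_1 + \delta_1, \quad c_{A2} = c_2 - \delta_2, \quad \gamma_1 = \delta_1, \quad \gamma_2 = \delta_2,
\]
so that the conclusion \eqref{tray1}--\eqref{tray2} reads exactly as the noisy Armijo--Wolfe conditions \eqref{eq_thm_existence_of_aw_step_aw_cond}, since $c_{A1}-\gamma_1 = c_1$ and $c_{A2}+\gamma_2 = c_2$. The hypothesis $\delta_1+\delta_2 < c_2-c_1$ together with $0<c_1<c_2<1$ guarantees $0 < c_{A1} < c_{A2} < 1$ and the admissibility requirements \eqref{gammas} on $\gamma_1,\gamma_2$.

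Before the lemma can be applied I must produce a stepsize that is Armijo--Wolfe for $\phi$ itself, which in turn requires that $p_k$ be a descent direction for $\phi$. Writing $\nabla\phi(x_k) = g_k - e(x_k)$ and using $-p_k^T g_k = \|p_k\|\,\|g_k\|\cos\theta_k$ with $\|e(x_k)\|\le \epsilon_g$, one obtains
\[
  -p_k^T\nabla\phi(x_k) \ge \|p_k\|\bigl(\|g_k\|\cos\theta_k - \epsilon_g\bigr),
\]
which is strictly positive once $\|g_k\|\cos\theta_k > \epsilon_g$; this follows from the lower bound on $\|g_k\|$ established below, so $p_k^T\nabla\phi(x_k) < 0$ and $\cos\wt{\theta}_k > 0$. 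Because $\phi$ is bounded below and has an $M$-Lipschitz gradient (Assumption~\ref{ass_Lip_grad_bounded_below}), the standard existence result for the Armijo--Wolfe line search \cite{mybook} then furnishes a stepsize $\alpha_k$ satisfying \eqref{eq_thm_ass_aw_cond} for $F_A=\phi$, $G_A=\nabla\phi$ with constants $c_{A1},c_{A2}$.

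It remains to verify the three conditions \eqref{3cond} from hypothesis \eqref{who}. The first is automatic since $\Lambda = 0$. The other two are stated in terms of $\|G_B(x)\| = \|g_k\|$ and the product $\|\nabla\phi(x_k)\|\,\|g_k\|$, whereas \eqref{who} constrains only $\|\nabla\phi(x_k)\|$; the bridge is the elementary bound $\|g_k\| \ge \|\nabla\phi(x_k)\| - \epsilon_g$. The factor of $2$ separating the bounds in \eqref{who} from those in \eqref{3cond} is exactly the slack needed to absorb $\epsilon_g$: the first two terms of \eqref{who} force $\|\nabla\phi(x_k)\| \ge 4\epsilon_g$ (using $c_1>0$ and $\cos\theta_k\le 1$), whence $\|g_k\| \ge \tfrac34\|\nabla\phi(x_k)\| \ge \tfrac12\|\nabla\phi(x_k)\|$. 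Consequently the $\|g_k\|$-bounds of the second line of \eqref{3cond}, being precisely half the corresponding terms of \eqref{who}, are met, and $\|\nabla\phi(x_k)\|\,\|g_k\| \ge \tfrac12\|\nabla\phi(x_k)\|^2$ recovers the product bound of the third line from the squared term in \eqref{who}. Feeding $\alpha_k$ and these verified inequalities into Lemma~\ref{lem_aw_with_noise} then yields \eqref{eq_thm_existence_of_aw_step_aw_cond}.

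The only delicate point is this last translation between $\|\nabla\phi(x_k)\|$ and $\|g_k\|$: one must confirm that the constants and the factor-of-two cushion in \eqref{who} are simultaneously large enough to (i) make $p_k$ a descent direction for $\phi$ and (ii) dominate all three quantities in \eqref{3cond} after the substitution $\|g_k\| \ge \|\nabla\phi(x_k)\|-\epsilon_g$. I expect this careful bookkeeping, rather than any conceptual difficulty, to be the crux, since the structural work is entirely delegated to Lemma~\ref{lem_aw_with_noise} and to the classical line-search existence theorem for $\phi$.
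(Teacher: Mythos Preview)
Your proposal is correct and follows essentially the same route as the paper: the identical identification $F_A=\phi$, $G_A=\nabla\phi$, $F_B=f$, $G_B=g$ with $c_{A1}=c_1+\delta_1$, $c_{A2}=c_2-\delta_2$, $\gamma_1=\delta_1$, $\gamma_2=\delta_2$, the same reduction of \eqref{3cond} via $\Lambda=0$ and $\|g_k\|\ge\tfrac12\|\nabla\phi(x_k)\|$, and the same appeal to the classical Armijo--Wolfe existence result for $\phi$. The bookkeeping you flag as the crux is exactly what the paper carries out, with no additional ideas needed.
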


\begin{proof}
We invoke Lemma \ref{lem_aw_with_noise} with  $x \gets x_k$, $F_A(\cdot) \gets \phi(\cdot)$, $G_A(\cdot) \gets \nabla \phi(\cdot)$, $F_B(\cdot) \gets f(\cdot)$, $G_B(\cdot) \gets g(\cdot)$, and $p \gets p_k$. Then, from \eqref{twotheta}-\eqref{twotheta2} we have that $\varphi_A = \wt{\theta}_k$ and $\varphi_B = {\theta}_k$. Let 
$\gamma_1 = \delta_1, \gamma_2 = \delta_2$; $c_{A1} = c_1 + \delta_1$ and $c_{A2} = c_2 - \delta_2$. Our assumptions on $\delta_1, \delta_2, c_1, c_2$ imply that $0 < c_{A1} < c_{A2} < 1$, and that conditions \eqref{gammas} hold. 
 
We must verify that the assumptions of Lemma \ref{lem_aw_with_noise} are satisfied.
By Assumption~\ref{ass_Lip_grad_bounded_below}, $F_A$ is bounded below and
\eqals{
	\nrm{}{G_A(y) - G_A(z)} \leq M \nrm{}{y-z} ,
}
so that \eqref{eq_thm_ass_quasi_lip} holds with $L = M$ and $\Lambda = 0$. We assume that $p^T G_B(x) = p_k^T g_k < 0$. To show that $p^T G_A(x) < 0$, note that by \eqref{who} 
\eqals{
	\nrm{}{\nabla \phi(x_k)} \geq \frac{4 ({c}_1 + \delta_1)}{\delta_1 } \frac{\epsilon_g}{\cos \theta_k} > 2\epsilon_g .
}
By Assumption~\ref{ass_bounded_noise}, we have that $ \nrm{}{\nabla \phi(x_k)-g_k} \leq \epsilon_g$. Therefore,
\eqal{ \label{makes}
	\nrm{}{g(x_k)} \geq \nrm{}{\nabla \phi(x_k)} - \epsilon_g \geq \frac{1}{2} \nrm{}{\nabla \phi(x_k)} .
}
We also have that
\eqals{
	\nrm{}{g(x_k)} \geq \frac{1}{2} \nrm{}{\nabla \phi(x_k)} 
	\geq \frac{2(c_1 + \delta_1)\epsilon_g}{\delta_1 \cos \theta_k} 
	> \frac{\epsilon_g}{\cos \theta_k} ,
}
or 
\eqals{
	\nrm{}{g(x_k)} \cos \theta_k > \epsilon_g .
}
Recalling again Assumption~\ref{ass_bounded_noise}, this bound yields
\eqals{
	p^T G_A(x) & \leq p^T G_B(x) + \nrm{}{p} \epsilon_g \\
	& = -\nrm{}{p} \bpa{\nrm{}{G_B(x)} \cos \varphi_B - \epsilon_g} \\
	& = -\nrm{}{p_k} \bpa{\nrm{}{g_k} \cos \theta_k - \epsilon_g} \\
	& < 0 .
}

Knowing that $p_k$ is a descent direction for the true function $\phi$, and since $\phi$ is continuously differentiable and bounded from below, we can guarantee \cite{mybook} the existence of a stepsize $\alpha = \alpha_k$ such that 
\eqals{
	F_A(x+\alpha p) & \leq F_A(x) + c_{A1} \alpha p^T G_A(x) \\
	p^T G_A(x+\alpha p) & \geq c_{A2} p^T G_A(x),
}
showing that \eqref{eq_thm_ass_aw_cond} is satisfied. 

To prove that \eqref{eq_thm_existence_of_aw_step_aw_cond} holds, all that is necessary is to show that \eqref{who} implies conditions \eqref{3cond}. The first condition is immediately satisfied, since we have shown that we can choose $\Lambda = 0$.
By the definitions given in the first paragraph of this proof, the other two conditions in \eqref{3cond} can be written as  
\eqal{
	\label{eq_thm_existence_of_aw_step_required_cond_raw}
	\nrm{}{g(x_k)} & \geq \max \bst{\frac{2 ({c}_1 + \delta_1)}{\delta_1 } \cm \frac{(1+{c}_2 {- \delta_2)}}{\delta_2 }}  \frac{\epsilon_g}{\cos \theta_k} \\
	\nrm{}{\nabla \phi(x_k)}  \nrm{}{g(x_k)} & \geq {{\frac{8M \epsilon_f}{(1-{c}_2+\delta_2) \delta_1 \cos \theta_k \cos \wt{\theta}_k}}} .
}
To see that these two conditions hold, we first note that by \eqref{makes},
\eqals{
	\nrm{}{g(x_k)} \geq \frac{1}{2} \nrm{}{\nabla \phi(x_k)} \geq \max \bst{\frac{2 ({c}_1 + \delta_1)}{\delta_1 } \frac{\epsilon_g}{\cos \theta_k} \cm \frac{(1+{c}_2-\delta_2)}{\delta_2 } \frac{\epsilon_g}{\cos \theta_k}}.
}
Also, from \eqref{who}
\eqals{
	\nrm{}{\nabla \phi(x_k)}  \nrm{}{g(x_k)} \geq \frac{1}{2} \nrm{}{\nabla \phi(x_k)}^2 \geq {\frac{8M \epsilon_f}{(1-{c}_2+\delta_2) \delta_1 \cos \theta_k \cos \wt{\theta}_k}} \, .
}
Hence, all the conditions of Lemma~\ref{lem_aw_with_noise} are satisfied, and we conclude that there exists a stepsize $\alpha$ that satisfies \eqref{eq_thm_existence_of_aw_step_aw_cond}.
\end{proof}


In the previous theorem we gave conditions under which the Armijo-Wolfe conditions are satisfied with respect to $f$ and $g$. We now use Lemma~\ref{lem_aw_with_noise} to show that satisfaction of the Armijo-Wolfe conditions for the approximate function $f$ implies satisfaction for the true objective $\phi$, under certain conditions.

\begin{thm}
\label{thm_goodness_of_aw_step} Suppose Assumptions \ref{ass_Lip_grad_bounded_below} and \ref{ass_bounded_noise} are satisfied, and that at iteration $k$ the search direction $p_k$ satisfies $p_k^T g_k < 0$. 
Let $\theta_k$ and $\wt{\theta}_k$ be defined by \eqref{twotheta}, \eqref{twotheta2}.
Let $0 < c_1 < c_2 < 1$, {and $\wh \delta_1, \wh \delta_2$} be constants such that $0 < \wh{\delta}_1 < c_1$, $0 < \wh{\delta}_2 < 1 - c_2$.
Suppose there exists a stepsize $\alpha_k$ such that
\eqals{
	f(x_k + \alpha_k p_k) & \leq f(x_k) + c_1 \alpha_k p_k^T g(x_k) \\
	p_k^T g(x_k + \alpha_k p_k) & \geq c_2 p_k^T g(x_k).
}
If
\eqal{  \label{tennis}
	\nrm{}{\nabla \phi(x_k)} \geq \max \Bigg\{ &\frac{8 \epsilon_g}{(1-c_2) \cos \theta_k} \cm \sqrt{\frac{16 M \epsilon_f}{\wh{\delta}_1 (1-c_2) \cos \theta_k \cos \wt{\theta}_k}} \cm \\
	& \frac{2 c_1 \epsilon_g}{\wh{\delta}_1 \cos \wt{\theta}_k} \cm \frac{(1+c_2) \epsilon_g}{\wh{\delta}_2 \cos \wt{\theta}_k}\Bigg\},
}
then $\alpha_k$  satisfies
\eqal{
	\label{eq_thm_goodness_of_aw_step_aw_condition}
	\phi(x_k + \alpha_k p_k) & \leq \phi(x_k) + (c_1 - \wh{\delta}_1) \alpha_k p_k^T \nabla \phi(x_k) \\
	p_k^T \nabla \phi(x_k + \alpha_k p_k) & \geq (c_2 + \wh{\delta}_2) p_k^T \nabla \phi(x_k) .
}
\end{thm}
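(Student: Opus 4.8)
The plan is to apply Lemma~\ref{lem_aw_with_noise} exactly as in the proof of Theorem~\ref{thm_existence_of_aw_step}, but with the roles of the two function/gradient pairs interchanged, as anticipated in the remark preceding that lemma. Concretely, I would set $F_A(\cdot) \gets f(\cdot)$, $G_A(\cdot) \gets g(\cdot)$ (the noisy pair now plays the role of the ``reference'' functions on which the Armijo--Wolfe conditions are assumed to hold) and $F_B(\cdot) \gets \phi(\cdot)$, $G_B(\cdot) \gets \nabla\phi(\cdot)$ (the true pair now plays the role of the ``target''). With $x \gets x_k$ and $p \gets p_k$, the angle identifications of \eqref{twotheta}--\eqref{twotheta2} become $\varphi_A = \theta_k$ and $\varphi_B = \wt\theta_k$, the mirror image of the previous theorem. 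I would take $c_{A1} = c_1$, $c_{A2} = c_2$, $\gamma_1 = \wh\delta_1$, $\gamma_2 = \wh\delta_2$; the hypotheses $0 < \wh\delta_1 < c_1$ and $0 < \wh\delta_2 < 1 - c_2$ are precisely \eqref{gammas}, and $0 < c_1 < c_2 < 1$ gives $0 < c_{A1} < c_{A2} < 1$. Under these substitutions the conclusion of the lemma reads word for word as \eqref{eq_thm_goodness_of_aw_step_aw_condition}.

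The key structural difference from Theorem~\ref{thm_existence_of_aw_step} is the quasi-Lipschitz constant for $G_A = g$. Since $g = \nabla\phi + e$ with $\nrm{}{e(\cdot)} \leq \epsilon_g$ by Assumption~\ref{ass_bounded_noise}, Assumption~\ref{ass_Lip_grad_bounded_below} gives $\nrm{}{g(y) - g(z)} \leq M\nrm{}{y-z} + 2\epsilon_g$, so \eqref{eq_thm_ass_quasi_lip} now holds with $L = M$ and, crucially, $\Lambda = 2\epsilon_g \neq 0$ --- in contrast to the previous proof where $\Lambda = 0$. This nonzero offset is exactly what produces the first entry of \eqref{tennis}: the first bound in \eqref{3cond} becomes $\nrm{}{g_k} \geq 4\epsilon_g/[(1-c_2)\cos\theta_k]$. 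The remaining hypotheses transfer immediately: the difference bounds \eqref{eq_thm_ass_diff} are Assumption~\ref{ass_bounded_noise}, $F_A = f$ is bounded below (being a bounded perturbation of $\phi$), and the assumed step already satisfies \eqref{eq_thm_ass_aw_cond} with $p^T G_A(x) = p_k^T g_k < 0$ given. The lemma's continuity hypothesis on $F_A$ is immaterial here, since its proof never invokes it and the Armijo--Wolfe step is given rather than constructed.

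Next I would carry out the two preliminary estimates. First, to verify $p^T G_B(x) = p_k^T\nabla\phi(x_k) < 0$, note that \eqref{tennis} forces $\nrm{}{\nabla\phi(x_k)} > 2\epsilon_g$, whence $\nrm{}{g_k} \geq \nrm{}{\nabla\phi(x_k)} - \epsilon_g \geq \tfrac12\nrm{}{\nabla\phi(x_k)}$ and therefore $\nrm{}{g_k}\cos\theta_k > \epsilon_g$; writing $p_k^T\nabla\phi(x_k) \leq p_k^T g_k + \nrm{}{p_k}\epsilon_g = -\nrm{}{p_k}(\nrm{}{g_k}\cos\theta_k - \epsilon_g) < 0$ then settles it. Second, the same inequality $\nrm{}{g_k} \geq \tfrac12\nrm{}{\nabla\phi(x_k)}$ is the bridge that turns the single hypothesis \eqref{tennis} on $\nrm{}{\nabla\phi(x_k)}$ into the three conditions of \eqref{3cond}: the factor $\tfrac12$ accounts for the discrepancy between the coefficients $8$ and $16$ in \eqref{tennis} and the coefficients $4$ and $8$ in \eqref{3cond}, the second condition of \eqref{3cond} is just two entries of \eqref{tennis} verbatim (they bound $\nrm{}{G_B} = \nrm{}{\nabla\phi(x_k)}$ directly), and the third follows from $\nrm{}{g_k}\nrm{}{\nabla\phi(x_k)} \geq \tfrac12\nrm{}{\nabla\phi(x_k)}^2$ together with the square-root entry of \eqref{tennis}.

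I expect the only real care to lie in the bookkeeping rather than in any genuinely hard estimate: checking that the constants in \eqref{tennis} have been calibrated so that each of the three inequalities in \eqref{3cond} comes out correctly after substituting $\Lambda = 2\epsilon_g$ and the reversed angle identifications. The one conceptual point worth flagging is that the gradient error $e$ now enters the Lipschitz-type bound for $G_A = g$ as the additive constant $\Lambda = 2\epsilon_g$, which is why the lower bound on $\nrm{}{\nabla\phi(x_k)}$ in this direction must dominate a term proportional to $\epsilon_g/[(1-c_2)\cos\theta_k]$ and cannot be driven to zero by shrinking $\epsilon_f$ alone.
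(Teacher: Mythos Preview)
Your proposal is correct and follows essentially the same approach as the paper: apply Lemma~\ref{lem_aw_with_noise} with the roles reversed ($F_A\gets f$, $G_A\gets g$, $F_B\gets\phi$, $G_B\gets\nabla\phi$), identify $L=M$, $\Lambda=2\epsilon_g$, use $\nrm{}{g_k}\geq\tfrac12\nrm{}{\nabla\phi(x_k)}$ as the bridge, and verify the three conditions of \eqref{3cond} from \eqref{tennis}. The only addition you make beyond the paper is the (correct) remark that the continuity and boundedness-below hypotheses on $F_A$ are not actually needed once the Armijo--Wolfe step is assumed rather than constructed.
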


\begin{proof}
We prove this by applying Lemma \ref{lem_aw_with_noise}, reversing the roles of $F_A, F_B$, compared to Lemma~\ref{thm_existence_of_aw_step}. 
Specifically, we now let $x \gets x_k$, $F_A(\cdot) \gets f(\cdot)$, $G_A(\cdot) \gets g(\cdot)$, $F_B(\cdot) \gets \phi(\cdot)$, $G_B(\cdot) \gets \nabla \phi(\cdot)$, and $p \gets p_k$. We define $\varphi_A = {\theta}_k$ and $\varphi_B = \wt{\theta}_k$ as in \eqref{twotheta}, \eqref{twotheta2}.   Let $c_{A1} = c_1$, $c_{A2} = c_2$; $\gamma_1 = \wh{\delta}_1, \gamma_2 = \wh{\delta}_2$. Clearly we have $0 < c_{A1} <$ $c_{A2} < 1$.

We need to verify that the assumptions of Lemma~\ref{lem_aw_with_noise} are satisfied. By Assumptions~\ref{ass_Lip_grad_bounded_below} and \ref{ass_bounded_noise}  we have
\eqals{
	\nrm{}{G_A(y) - G_A(z)} = \nrm{}{g(y) - g(z)} \leq \nrm{}{\nabla \phi(y) - \nabla \phi(z)} + 2\epsilon_g \leq M \nrm{}{y-z} + 2 \epsilon_g ,
}
and hence  Assumption~\eqref{eq_thm_ass_quasi_lip} is satisfied with $L = M$ and $\Lambda = 2 \epsilon_g$. 

We assume that  $p^T G_A(x) = p^Tg_k< 0$. To show that $p^T G_B(x) < 0$, we note from \eqref{tennis} that
\eqals{
	\nrm{}{\nabla \phi(x_k)} \geq \frac{8 \epsilon_g}{(1-c_2) \cos \theta_k} > 2\epsilon_g ,
}
and as in \eqref{makes}
\eqals{
	\nrm{}{g(x_k)} \geq \nrm{}{\nabla \phi(x_k)} - \epsilon_g \geq \frac{1}{2} \nrm{}{\nabla \phi(x_k)} .
}
Therefore,
\eqal{  \label{court}
	\nrm{}{g(x_k)} \geq \frac{1}{2} \nrm{}{\nabla \phi(x_k)} \geq \frac{4 \epsilon_g}{(1-c_2) \cos \theta_k} > \frac{\epsilon_g}{\cos \theta_k},
}
i.e,
\eqals{
	\nrm{}{g_k} \cos \theta_k > \epsilon_g.
}
Now,
\eqals{
	p^T G_B(x) & \leq p^T G_A(x) + \nrm{}{p} \epsilon_g \\
	& = - \nrm{}{p} \bpa{\nrm{}{G_A(x)} \cos \varphi_A - \epsilon_g} \\
	& = - \nrm{}{p_k} \bpa{\nrm{}{g_k} \cos \theta_k - \epsilon_g} \\
	& < 0 .
}
It remains to show that conditions \eqref{3cond} are satisfied, from which it would follow that  $\alpha_k$ satisfies \eqref{eq_thm_goodness_of_aw_step_aw_condition}, proving the theorem. Since  $\Lambda = 2 \epsilon_g$, conditions \eqref{3cond} read, in the notation of this lemma,

\eqal{
	\label{eq_thm_goodness_of_aw_step_requirement}
	\nrm{}{g(x_k)} & \geq \frac{4 \epsilon_g}{(1-c_2) \cos \theta_k} \\
	\nrm{}{\nabla \phi(x_k)} & \geq \max \bst{\frac{2 c_1}{\wh{\delta}_1} \cm \frac{(1+c_2)}{\wh{\delta}_2}}  \frac{\epsilon_g}{\cos \wt{\theta}_k} \\
	\nrm{}{\nabla \phi(x_k)}  \nrm{}{g(x_k)} & \geq {\frac{8 M \epsilon_f}{\wh{\delta}_1 (1-c_2) \cos \theta_k \cos \wt{\theta}_k}}.
}
We have already shown,  in \eqref{court}, the first condition, and the second condition follows from Assumption\eqref{tennis}. Finally, from \eqref{court} and \eqref{tennis},
\[
	\nrm{}{\nabla \phi(x_k)} \nrm{}{g(x_k)} \geq \frac{1}{2} \nrm{}{\nabla \phi(x_k)}^2 \geq  {\frac{8 M \epsilon_f}{\wh{\delta}_1 (1-c_2) \cos \theta_k \cos \wt{\theta}_k}}.
\]
\end{proof}

Theorems \ref{thm_existence_of_aw_step} and \ref{thm_goodness_of_aw_step} establish the existence of a neighborhood of the solution, defined in terms of $\nrm{}{\nabla \phi(x)}$, outside of which the Armijo-Wolfe line search strategy is well defined. 
This neighborhood depends on  $\epsilon_f$ and $\epsilon_g$, as well as $\cos \theta_k$ and $\cos \wt{\theta}_k$ --- and the latter two quantities have not yet been bounded away from zero. Thus, similar to the central role that $\cos \wt{\theta}_k$ plays in the classic convergence analysis of gradient methods, $\cos \theta_k$ and $\cos \wt{\theta}_k$ play a key role in the convergence analysis of our algorithm presented below.

\subsection{Lengthening the Differencing Interval}  \label{longer}
The BFGS method is complex in that Hessian updates affect the search direction and vice versa. As a result, it is not possible to show that the condition number of the Hessian approximations $B_k$ is bounded, without first showing convergence of the iterates.
Nevertheless,  it is has been shown \cite{ByNoTool}  that under mild assumptions,  the angle between the search direction and the negative gradient can be bounded away from zero for a fraction of the iterates, which is  sufficient to establish R-linear convergence. 

To apply the results in \cite{ByNoTool}, the curvature pairs $(s_k, y_k)$ used to update $H_k$ must satisfy  
\eqal{
	\label{eq_tool_paper_condition}
	\frac{y_k^Ts_k}{s_k^T s_k}  \geq \widehat m \cm \qquad \frac{y_k^T y_k}{y_k^T s_k} & \leq \widehat M , \qquad \forall k \cm
}
for some constants $0 < \widehat m \leq \widehat M$. These conditions will not generally hold unless we make the following additional assumption. 
\begin{assum}  \label{ass_strong_convex}
     The function $\phi$ is $m$-strongly convex, with $0 < m \leq M$.
     (Recall that $M$ is defined in Assumptions~\ref{ass_Lip_grad_bounded_below}.)
\end{assum}
Assumptions~\ref{ass_Lip_grad_bounded_below}, \ref{ass_bounded_noise} and \ref{ass_strong_convex} are still not sufficient to establish \eqref{eq_tool_paper_condition} because, if {$\nrm{}{{s}_k}$} is small  compared to the error in the gradient, $\epsilon_g$,  then {the vector $y_k$ can be highly unreliable.} To overcome this, we increase the differencing interval and recompute the gradient before performing the BFGS update, as stipulated in Algorithm~\ref{algo1}, i.e., we set
\[
s_k \gets l  \frac{p_k}{\nrm{}{p_k}}, \qquad
                y_k \gets g\bpa{x_k + s_k} - g(x_k), \quad l>0.
 \]
 We show below that if $l$ is sufficiently large, these conditions ensure that \eqref{eq_tool_paper_condition} holds. 
  Lemma~\ref{lemma_effect_of_lengthening} identifies the minimum value of  $l$. Before presenting that result, we need the following technical lemma, whose proof is given in the Appendix. In what follows, $\lambda(H) $ denotes the set of eigenvalues of a matrix $H$.

\begin{lem}
\label{lemma_mu_L_smooth}
Let $s, y \in \mb{R}^d$ be two non-zero vectors, and let $0 < \mu \leq L$. 
There exists a positive definite matrix $H \in \mb{S}^{d \times d}$ with eigenvalues $\lambda(H) \subseteq [\mu, L]$  such that 
\eqals{
	y = H s
}
if and only if
\eqal{  \label{exclaim}
	\nrm{}{y - \frac{L+\mu}{2}s} \leq \frac{L - \mu}{2} \nrm{}{s}.
}
\end{lem}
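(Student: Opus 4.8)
The statement is an equivalence, so the plan is to prove the two implications separately. Throughout I would abbreviate $c = \tfrac{L+\mu}{2}$ and $r = \tfrac{L-\mu}{2}\ge 0$, so that $[\mu,L]=[c-r,c+r]$ and the claimed inequality \eqref{exclaim} reads $\nrm{}{y-cs}\le r\,\nrm{}{s}$. In both directions the essential move is to work with the shifted matrix $A := H - cI$ rather than $H$ itself, since $\lambda(H)\subseteq[\mu,L]$ is equivalent to $\lambda(A)\subseteq[-r,r]$, i.e.\ to the spectral-norm bound $\nrm{}{A}\le r$.

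For the forward (necessity) direction, suppose such an $H$ exists. Then $A=H-cI$ is symmetric with $\lambda(A)\subseteq[-r,r]$, so $\nrm{}{A}\le r$. Since $y-cs=(H-cI)s=As$, I would simply estimate $\nrm{}{y-cs}=\nrm{}{As}\le\nrm{}{A}\,\nrm{}{s}\le r\,\nrm{}{s}$, which is exactly \eqref{exclaim}. This direction is routine.

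The substance of the lemma is the converse (sufficiency), where I expect the real work. Assuming $\nrm{}{y-cs}\le r\,\nrm{}{s}$, I would construct a symmetric $A$ with $\nrm{}{A}\le r$ and $As=w$, where $w:=y-cs$; then $H:=A+cI$ is automatically symmetric, has $\lambda(H)\subseteq[\mu,L]$ (hence positive definite since $\mu>0$), and satisfies $Hs=As+cs=w+cs=y$. So everything reduces to the claim: given $s\ne 0$ and $w$ with $\nrm{}{w}\le r\,\nrm{}{s}$, there is a symmetric $A$ with $\nrm{}{A}\le r$ and $As=w$. To build it I would pass to $u:=s/\nrm{}{s}$ and the rescaled target $w':=w/\nrm{}{s}$, which satisfies $\nrm{}{w'}\le r$, and decompose $w'=\beta u+\gamma v$ with $\beta=u^T w'$, $\gamma=\nrm{}{w'-\beta u}\ge 0$, and $v$ a unit vector orthogonal to $u$ (the degenerate case $\gamma=0$ being handled by the rank-one matrix $A=\beta\,uu^T$, and $r=0$ forcing $w=0$ and $H=\mu I$). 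Noting $\beta^2+\gamma^2=\nrm{}{w'}^2\le r^2$, I would define $A$ to act as $0$ on $(\mathrm{span}\{u,v\})^\perp$ and, in the orthonormal basis $\{u,v\}$, as the symmetric trace-zero block $\left[\begin{smallmatrix}\beta&\gamma\\\gamma&-\beta\end{smallmatrix}\right]$, i.e.\ $A=\beta(uu^T-vv^T)+\gamma(uv^T+vu^T)$. Then $Au=\beta u+\gamma v=w'$ by construction, and the block's eigenvalues are $\pm\sqrt{\beta^2+\gamma^2}=\pm\nrm{}{w'}\in[-r,r]$, so $\lambda(A)\subseteq[-r,r]$ and $\nrm{}{A}\le r$, completing the claim and hence the lemma.

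The one genuine obstacle is completing the off-diagonal data $(\beta,\gamma)$ into a symmetric $2\times 2$ matrix whose spectrum stays inside $[-r,r]$; the nontrivial idea is the \emph{trace-zero} choice of diagonal entries $(\beta,-\beta)$. Naive completions fail — for instance putting $\beta$ on both diagonal entries gives eigenvalues $\beta\pm\gamma$, and $\beta^2+\gamma^2\le r^2$ does not control $|\beta|+|\gamma|$ — whereas forcing trace zero makes the spectral radius of the block equal to $\nrm{}{w'}$, turning the hypothesis $\nrm{}{w'}\le r$ directly into the required eigenvalue bound. Once this choice is identified, every remaining verification is a short computation.
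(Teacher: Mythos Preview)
Your argument is correct. The forward direction is identical to the paper's; the converse differs in the explicit construction of the symmetric matrix $A$ sending $s$ to $y-cs$.

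The paper reduces, as you do, to mapping the unit vector $u=s/\nrm{}{s}$ to the unit vector $v=(y-cs)/\nrm{}{y-cs}$ by a symmetric matrix with eigenvalues in $\{-1,1\}$, and accomplishes this with a Householder-type reflection $Q=2ee^T-I$ where $e=(u+v)/\nrm{}{u+v}$; then $H=cI+\frac{\nrm{}{y-cs}}{\nrm{}{s}}\,Q$. Your construction instead decomposes $w'=w/\nrm{}{s}$ orthogonally as $\beta u+\gamma v$ and uses the rank-two trace-zero matrix $A=\beta(uu^T-vv^T)+\gamma(uv^T+vu^T)$. Both matrices act correctly on $u$ and both have spectral radius exactly $\nrm{}{w'}$; the difference is that the paper's $A$ has eigenvalues $\pm\nrm{}{w'}$ with multiplicities $1$ and $d-1$, while yours has $\pm\nrm{}{w'}$ each once and $0$ with multiplicity $d-2$. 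The reflection approach is a one-line formula drawing on a classical tool; your $2\times2$ block makes the spectral-radius computation completely explicit and nicely isolates the trace-zero choice as the key idea. Either route is perfectly adequate here.
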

With this result in hand, it is easy to establish the following bounds.

\begin{lem}
\label{lemma_effect_of_lengthening}
(Choice of the Lengthening Parameter)
Suppose Assumptions \ref{ass_Lip_grad_bounded_below}, \ref{ass_bounded_noise} and \ref{ass_strong_convex} hold. Let $s \in \mb{R}^d$ be a vector such that $\nrm{}{s} \geq l$, and define $y = g(x + s) - g(x)$. If 
   $$l > 2 \epsilon_g / m,$$
then 
\eqal{
	\label{toolcon}
	\frac{y^T s}{s^T s} & \geq \bpa{m - \frac{2\epsilon_g}{l}} \eqdef \widehat m > 0\\
	\frac{y^T y}{y^T s} & \leq \bpa{M + \frac{2 \epsilon_g}{l}} \eqdef \widehat M > 0 .
}
\end{lem}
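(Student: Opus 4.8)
The plan is to reduce the statement to the purely algebraic question of when the noisy secant pair can be written as $y = Hs$ for a symmetric positive definite $H$ with a controlled spectrum, and then to invoke Lemma~\ref{lemma_mu_L_smooth}. First I would record the decomposition $y = \bar y + \Delta e$, where $\bar y \defeq \nabla\phi(x+s) - \nabla\phi(x)$ is the noise-free gradient difference and $\Delta e \defeq e(x+s) - e(x)$ satisfies $\|\Delta e\| \le 2\epsilon_g$ by Assumption~\ref{ass_bounded_noise}.

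The first bound in \eqref{toolcon} then follows directly: strong convexity (Assumption~\ref{ass_strong_convex}) gives $\bar y^T s \ge m\|s\|^2$, while Cauchy--Schwarz gives $\Delta e^T s \ge -2\epsilon_g\|s\|$, so $y^T s \ge \|s\|^2\bigl(m - 2\epsilon_g/\|s\|\bigr) \ge \|s\|^2\bigl(m - 2\epsilon_g/l\bigr)$ since $\|s\| \ge l$; dividing by $\|s\|^2$ yields $y^Ts/s^Ts \ge \widehat{m}$, and $\widehat{m}>0$ because $l > 2\epsilon_g/m$. As a by-product this shows $y^Ts>0$, hence $y \neq 0$, which I need in order to apply Lemma~\ref{lemma_mu_L_smooth}; together with $m\le M$ it also gives $0 < \widehat{m} \le \widehat{M}$.

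For the second bound I would argue that $y = Hs$ for some symmetric positive definite $H$ with $\lambda(H) \subseteq [\widehat{m}, \widehat{M}]$. Granting this, both conclusions are immediate from linear algebra: $y^Ts = s^THs \ge \widehat{m}\|s\|^2$, and since $\widehat{M}H - H^2 = H(\widehat{M}I - H) \succeq 0$ (the two commuting factors are symmetric and positive semidefinite, so their product is too), one gets $y^Ty = s^TH^2 s \le \widehat{M}\,s^THs = \widehat{M}\,y^Ts$, i.e. $y^Ty/y^Ts \le \widehat{M}$. I would emphasize that a naive attempt to bound this ratio by separately bounding $\|y\|^2$ above and $y^Ts$ below \emph{fails} (it would force $Ml + 4\epsilon_g \le ml$, which is false for $M \ge m$); this is exactly why the secant representation of Lemma~\ref{lemma_mu_L_smooth} is essential, and it is the crux of the argument.

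To produce such an $H$, I would verify the hypothesis of Lemma~\ref{lemma_mu_L_smooth} with $\mu = \widehat{m}$ and $L = \widehat{M}$, namely $\bigl\|y - \tfrac{\widehat{M}+\widehat{m}}{2}s\bigr\| \le \tfrac{\widehat{M}-\widehat{m}}{2}\|s\|$. Using the $C^2$ assumption I write $\bar y = \bar H s$ with $\bar H = \int_0^1 \nabla^2\phi(x+ts)\,dt$, whose eigenvalues lie in $[m,M]$ by strong convexity and the Lipschitz bound on $\nabla\phi$; the ``only if'' direction of Lemma~\ref{lemma_mu_L_smooth} then gives $\bigl\|\bar y - \tfrac{M+m}{2}s\bigr\| \le \tfrac{M-m}{2}\|s\|$. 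Since $\tfrac{\widehat{M}+\widehat{m}}{2} = \tfrac{M+m}{2}$, the triangle inequality yields $\bigl\|y - \tfrac{\widehat{M}+\widehat{m}}{2}s\bigr\| \le \tfrac{M-m}{2}\|s\| + 2\epsilon_g$, and the proof closes by observing $2\epsilon_g \le \tfrac{2\epsilon_g}{l}\|s\| = \tfrac{\widehat{M}-\widehat{m}}{2}\|s\| - \tfrac{M-m}{2}\|s\|$ because $\|s\| \ge l$. The main obstacle is therefore conceptual rather than computational: recognizing that $\widehat{m},\widehat{M}$ are defined precisely so that the $2\epsilon_g$ perturbation is absorbed once $\|s\| \ge l$, and routing the bound on $y^Ty/y^Ts$ through the secant structure of Lemma~\ref{lemma_mu_L_smooth} instead of through crude norm estimates.
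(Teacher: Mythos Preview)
Your proposal is correct and follows essentially the same route as the paper: both use the $C^2$ mean-value representation $\bar y = \bar H s$ with $\lambda(\bar H)\subseteq[m,M]$, invoke Lemma~\ref{lemma_mu_L_smooth} to place $\bar y$ in the ball of radius $\tfrac{M-m}{2}\|s\|$ about $\tfrac{M+m}{2}s$, absorb the $2\epsilon_g$ perturbation into the enlarged radius using $\|s\|\ge l$, and then invoke Lemma~\ref{lemma_mu_L_smooth} again to obtain $y=Hs$ with $\lambda(H)\subseteq[\widehat m,\widehat M]$, from which both bounds follow. The only cosmetic differences are that you also give a direct proof of the first inequality (which the paper reads off from the $H$-representation) and spell out the $H(\widehat M I - H)\succeq 0$ step that the paper leaves implicit.
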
   
\begin{proof}
Let $\wt{y} = \nabla \phi(x+s) - \nabla \phi(x)$. {Since $\phi \in C^2$, we have that $\nabla \phi(x+s) - \nabla \phi(x)= A s$, where $A$ is the average Hessian
\eqals{
	A = \int_0^1 ~ \nabla^2 \phi(x + t \cdot s) ~ dt .
}
 }
Since $\phi$ is $m$-strongly convex with $M$-Lipschitz continuous gradients, we know that $\lambda(A) \subseteq [m, M]$, and by Lemma~\ref{lemma_mu_L_smooth} we have
\eqal{
	\nrm{}{\wt{y} - \frac{M+m}{2} s} \leq \frac{M-m}{2}  \nrm{}{s}.
}
By \eqref{fnoise} and Assumption \ref{ass_bounded_noise}, we have
\eqals{
	\nrm{}{y - \wt{y}} \leq 2 \epsilon_g ,
}
and hence
\eqals{
	\nrm{}{y - \frac{M+m}{2} s} \leq \frac{M-m}{2}  \nrm{}{s} + 2\epsilon_g .
}
If $\nrm{}{s} \geq l$, we have
\eqals{
	\frac{M-m}{2}  \nrm{}{s} + 2\epsilon_g \leq  \frac{M-m}{2}  \nrm{}{s} + \frac{2 \epsilon_g}{l} \nrm{}{s} \cm
}
and thus
\eqals{
	\nrm{}{y - \frac{M+m}{2} s} \leq \bpa{\frac{{M}-m}{2}+\frac{2 \epsilon_g}{l}}  \nrm{}{s} .
}
By defining 
\begin{equation}   \label{amarillo}
	\widehat m  = {m - \frac{2\epsilon_g}{l}} , \qquad
	\widehat M  = {M + \frac{2 \epsilon_g}{l}} ,
\end{equation}
we have
\eqals{
	\nrm{}{y - \frac{{\widehat M}+{\widehat m}}{2} s} \leq \frac{{\widehat M}-{\widehat m}}{2}  \nrm{}{s} .
}
Note that since $l > 2\epsilon_g/m$, we have $0 < \widehat m \leq \widehat M$. By  Lemma \ref{lemma_mu_L_smooth}, we know that there exists a positive definite matrix $H$ with $\lambda(H) \subseteq [\widehat m, \widehat M]$ such that
\eqals{
	y = H s .
}
Then it immediately follows that 
\[
	\frac{y^T s}{s^T s}  \geq \widehat m \cm \quad
	\frac{y^T y}{y^T s}  \leq \widehat M,
\]
which proves the result due to \eqref{amarillo}.
\end{proof}

We thus see from this lemma that if the lengthening parameter $l$ satisfies $l >2\epsilon_g/m$, the right hand sides in  \eqref{toolcon} are strictly positive, as needed for the analysis that follows.

\subsection{Properties of the ``Good Iterates"}

We now show that the angle between the search direction of Algorithm~\ref{algo1} and the true gradient is bounded away from $90^\circ$, for a fraction of all iterates. 
We begin by stating a result from \cite[Theorem 2.1]{ByNoTool}, which describes a fundamental property of the standard BFGS method (without errors).


\begin{lem}{(Existence of good iterates for classical BFGS)}
\label{lemma_good_iterates}
Let $H_0 \succ 0$, and let $\{H_k = B_k^{-1}\} $ be generated by the BFGS update \eqref{eq_bfgs_update} using \textbf{any} correction pairs $\bst{(s_k, y_k)}$ satisfying \eqref{eq_tool_paper_condition} for all $k$. Define $\Theta_k$ to be the angle between $s_k$ and $B_k s_k$, i.e.,
\eqal{
    \cos \Theta_k = \frac{s_k^T B_k s_k}{\nrm{}{s_k} \nrm{}{B_k s_k}} .
}
For a fixed scalar $q \in (0, 1)$,  let
\eqal{
	\label{eq_def_betas}
    \beta_0(q) & = \frac{1}{1-q} \bbr{{\rm tr}(B_0) - \log \det (B_0) + \wh M - 1 - \log \wh m} > 0\\
    \beta_1(q) & = e^{- \beta_0(q) /2} \in (0, 1) .\\
} 
Then we have, for all $k$,
\eqal{
	\label{eq_number_of_good_iterates}
	\left| {\Big\{j \in \bst{0, 1, \cdots, k-1} \big|\cos \Theta_j \geq \beta_1(q)\Big\}} \right| \geq q k .
}
\end{lem}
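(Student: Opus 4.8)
The plan is to monitor the growth of the Hessian approximations $B_k = H_k^{-1}$ through the scalar potential
\[
  \psi(B) = \tr(B) - \log\det(B),
\]
which is bounded below (indeed $\psi(B) \ge d$ for every positive definite $B \in \mb{S}^{d\times d}$, since each eigenvalue $\lambda$ contributes $\lambda - \log\lambda \ge 1$), and to argue that the only way $\psi(B_k)$ can stay positive as $k$ grows is for $\cos\Theta_j$ to be bounded away from zero on a definite fraction of the iterates. First I would pass from the inverse update \eqref{eq_bfgs_update} to the direct BFGS update
\[
  B_{k+1} = B_k - \frac{B_k s_k s_k^T B_k}{s_k^T B_k s_k} + \frac{y_k y_k^T}{y_k^T s_k},
\]
and record its two classical consequences: the trace recursion $\tr(B_{k+1}) = \tr(B_k) - \nrm{}{B_k s_k}^2/(s_k^T B_k s_k) + \nrm{}{y_k}^2/(y_k^T s_k)$, and, via the matrix-determinant lemma, the determinant recursion $\det(B_{k+1}) = \det(B_k)\,(y_k^T s_k)/(s_k^T B_k s_k)$.

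Introducing the Rayleigh quotient $r_k = (s_k^T B_k s_k)/\nrm{}{s_k}^2$ and the identities $\nrm{}{B_k s_k}^2/(s_k^T B_k s_k) = r_k/\cos^2\Theta_k$ and $\log\!\big((s_k^T B_k s_k)/\nrm{}{s_k}^2\big) = \log r_k$, I would substitute into $\psi(B_{k+1})$ and regroup the $r_k$-terms to obtain
\[
  \psi(B_{k+1}) = \psi(B_k) + \frac{\nrm{}{y_k}^2}{y_k^T s_k} - \log\frac{y_k^T s_k}{\nrm{}{s_k}^2} - \left(\frac{r_k}{\cos^2\Theta_k} - \log\frac{r_k}{\cos^2\Theta_k}\right) + \log\cos^2\Theta_k.
\]
The decisive step is the scalar inequality $t - \log t \ge 1$ (valid for all $t>0$, with equality at $t=1$), applied with $t = r_k/\cos^2\Theta_k$: it discards the unknown $r_k$ while leaving the clean, nonpositive term $\log\cos^2\Theta_k$ intact. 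Combining this with the curvature bounds \eqref{eq_tool_paper_condition}, i.e.\ $\nrm{}{y_k}^2/(y_k^T s_k) \le \widehat M$ and $(y_k^T s_k)/\nrm{}{s_k}^2 \ge \widehat m$, yields the per-iteration estimate $\psi(B_{k+1}) \le \psi(B_k) + (\widehat M - 1 - \log\widehat m) + \log\cos^2\Theta_k$.

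Finally I would telescope this from $0$ to $k-1$ and drop $\psi(B_k) > 0$ to get $\sum_{j=0}^{k-1}\log(1/\cos^2\Theta_j) \le \psi(B_0) + k(\widehat M - 1 - \log\widehat m)$. A counting argument then closes the proof: a ``bad'' iterate with $\cos\Theta_j < \beta_1(q) = e^{-\beta_0(q)/2}$ satisfies $\log(1/\cos^2\Theta_j) > \beta_0(q)$, so if $b$ is the number of bad iterates among $\{0,\dots,k-1\}$ then $b\,\beta_0(q) < \psi(B_0) + k(\widehat M - 1 - \log\widehat m)$. The definition \eqref{eq_def_betas} of $\beta_0(q)$ is calibrated precisely so that, using $\psi(B_0) \le k\,\psi(B_0)$ for $k\ge 1$, the right-hand side is at most $k(1-q)\beta_0(q)$; hence $b < (1-q)k$ and there are at least $qk$ good iterates.

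The hard part is the bookkeeping around $\psi$: one must verify that the $r_k$-dependent terms assemble exactly into the form $t - \log t$ so that the dimension- and iteration-dependent unknowns cancel, and then match the telescoped constant $\tr(B_0) - \log\det(B_0) + \widehat M - 1 - \log\widehat m$ against the prefactor $1/(1-q)$ appearing in \eqref{eq_def_betas}. This calibration --- absorbing the one-time term $\psi(B_0)$ into the linear-in-$k$ budget --- is what makes the good-iterate fraction come out to exactly $q$.
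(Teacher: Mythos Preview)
The paper does not prove this lemma; it is quoted verbatim from \cite[Theorem~2.1]{ByNoTool} and invoked as a black box. Your argument is correct and is precisely the classical Byrd--Nocedal potential-function proof from that reference: track $\psi(B)=\tr(B)-\log\det(B)$ through the direct BFGS recursion, regroup the Rayleigh-quotient terms into the form $t-\log t$ with $t=r_k/\cos^2\Theta_k$, apply $t-\log t\ge 1$ together with the curvature bounds \eqref{eq_tool_paper_condition}, telescope, and count. The calibration you describe --- absorbing the one-time contribution $\psi(B_0)$ into the per-iteration budget via $\psi(B_0)\le k\,\psi(B_0)$ --- is exactly how the constant in \eqref{eq_def_betas} arises, and your counting step ($b\,\beta_0(q)<k(1-q)\beta_0(q)\Rightarrow b<(1-q)k$) delivers the conclusion. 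There is nothing to compare against in the present paper, but relative to the cited source your sketch is faithful and complete.
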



We now  establish a lower bound for the cosine of the angle between the quasi-Newton direction of Algorithm~\ref{algo1} and $-g_k$, i.e., a bound on $\cos \theta_k$ defined by setting $p_k \leftarrow - H_k g(x_k)$ in \eqref{twotheta}.

\begin{cor}
\label{cor_lower_bound_cos_theta}
Consider Algorithm~\ref{algo1} with lengthening parameter $l > 2\epsilon_g/m$ and suppose that Assumptions~\ref{ass_Lip_grad_bounded_below}, \ref{ass_bounded_noise} and \ref{ass_strong_convex} hold.  Let $\theta_k$ be the angle between $p_k = - H_k g(x_k)$ and $-g(x_k)$. 
For a given $q \in (0,1)$, set $\beta_1$ as in Lemma~\ref{lemma_good_iterates}, and define the index  $J$ of ``good iterates" generated by Algorithm~\ref{algo1} as
\eqal{
   J = \bst{j \in \mb{N} | \cos \theta_j \geq \beta_1} ,
}
as well as the set $J_k = J \cap \bst{0, 1, 2, ..., k-1}$. Then,  
\eqal{  \label{go}
   \abs{J_k} \geq q k .
}
\end{cor}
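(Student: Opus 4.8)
The plan is to reduce the statement to Lemma~\ref{lemma_good_iterates}, which already guarantees that a fraction $q$ of the BFGS iterates satisfy $\cos \Theta_j \geq \beta_1(q)$, where $\Theta_j$ is the angle between $s_j$ and $B_j s_j$. Two facts must be established to make this reduction work: first, that the correction pairs $(s_k, y_k)$ produced by Algorithm~\ref{algo1} genuinely satisfy the hypothesis \eqref{eq_tool_paper_condition} of that lemma; and second, that the angle $\Theta_k$ appearing there is in fact identical to the angle $\theta_k$ appearing in the corollary. Once both are in place, the index set in \eqref{eq_number_of_good_iterates} coincides with $J$, and the bound $|J_k| \geq qk$ follows at once.

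For the first fact, I would observe that in both branches of Algorithm~\ref{algo1} the correction vector $s_k$ has norm at least $l$: in the ``usual'' branch $s_k = \alpha_k p_k$ is only used when $\|\alpha_k p_k\| \geq l$, while in the lengthening branch (line~\ref{algo_line_lengthening}) one has $\|s_k\| = l$ exactly. Since $l > 2\epsilon_g/m$ by assumption, Lemma~\ref{lemma_effect_of_lengthening} applies directly with $s \gets s_k$, yielding \eqref{toolcon}, i.e.\ \eqref{eq_tool_paper_condition}, with the explicit constants $\widehat m = m - 2\epsilon_g/l > 0$ and $\widehat M = M + 2\epsilon_g/l$. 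These are precisely the $\widehat m, \widehat M$ that enter the definitions of $\beta_0(q)$ and $\beta_1(q)$, so the constant $\beta_1$ in the corollary is the same object as in Lemma~\ref{lemma_good_iterates}.

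The conceptual crux is the second fact, and it rests on the observation that, no matter which branch is taken, $s_k$ is always a \emph{positive scalar multiple} of the search direction: $s_k = c_k p_k$ with $c_k = \alpha_k > 0$ in the usual branch and $c_k = l/\|p_k\| > 0$ in the lengthening branch. Since $B_k = H_k^{-1}$ and $p_k = -H_k g(x_k)$, we have $B_k p_k = -g(x_k)$, hence $B_k s_k = -c_k\, g(x_k)$. Substituting into $\cos\Theta_k = s_k^T B_k s_k / (\|s_k\|\,\|B_k s_k\|)$, the factors of $c_k$ cancel and we obtain $\cos\Theta_k = -p_k^T g(x_k)/(\|p_k\|\,\|g(x_k)\|) = \cos\theta_k$ by definition \eqref{twotheta}. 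Thus the two angles coincide exactly.

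With both facts in hand, Lemma~\ref{lemma_good_iterates} gives $|\{j \in \{0,\dots,k-1\} : \cos\theta_j \geq \beta_1\}| \geq qk$, which is exactly $|J_k| \geq qk$. The only point requiring care is the degenerate case $p_k = 0$ (equivalently $g(x_k) = 0$, since $H_k \succ 0$), which signals that the iteration has already reached a stationary point of the noisy model and may be excluded. I do not anticipate a genuine obstacle: the entire argument is a direct composition of the two preceding lemmas, with the parallelism $s_k \parallel p_k$ serving as the bridge that identifies $\Theta_k$ with $\theta_k$. If there is a ``hard part,'' it is simply noticing that this parallelism holds uniformly across both branches of the algorithm, which is what collapses the two angle definitions onto each other.
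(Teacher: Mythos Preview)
Your proposal is correct and follows essentially the same approach as the paper: verify \eqref{eq_tool_paper_condition} via Lemma~\ref{lemma_effect_of_lengthening} (using $\|s_k\|\geq l$ in both branches), then identify $\Theta_k$ with $\theta_k$ through the parallelism $s_k\parallel p_k$ and $B_k p_k=-g_k$, and invoke Lemma~\ref{lemma_good_iterates}. The paper's proof is a terse two-line version of exactly this argument; you have simply spelled out the details it leaves implicit.
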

\begin{proof}
Since  $l > 2 \epsilon_g/m$, we know by \eqref{toolcon} in Lemma~\ref{lemma_effect_of_lengthening}  that conditions \eqref{eq_tool_paper_condition} are satisfied for all $k$. Since 
\eqals{
	\Theta_k = \angle \bpa{s_k, B_k s_k} = \angle \bpa{p_k, B_k p_k} = \angle \bpa{p_k, -g_k} = \theta_k,
}
\eqref{go} follows from Lemma \ref{lemma_good_iterates}.
\end{proof}

Having established a lower bound on $\cos \theta_k$ (for the good iterates), the next step is to establish a similar lower bound for $\cos \wt{\theta}_k$. To do so, we first prove the following result, which we state in some generality.

\begin{lem}
\label{lemma_bound_theta_tilde}
Let $p, g_1, g_2 \in \mb{R}^d$ be non-zero vectors. Let $\vartheta_1$ be the angle between $p$ and $g_1$, and $\vartheta_2$ the angle between $p$ and $g_2$. Assume 
\eqal{   \label{var1}
	\cos \vartheta_1 \geq \beta > 0 \cm
}
and that $g_1$ and $g_2$ satisfy 
\eqal{   \label{var2}
	\nrm{}{g_1 - g_2} \leq \epsilon .
}
If in addition
\eqal{  \label{var3}
	\frac{\epsilon}{\nrm{}{g_2}} \leq \frac{\beta}{4} ,
}
then
\eqals{
	\cos \vartheta_2 \geq \frac{\beta}{2} .
}
\end{lem}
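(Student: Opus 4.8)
The plan is to normalize the direction and transfer the angle bound from $g_1$ to $g_2$ by treating $g_2$ as a small perturbation of $g_1$. Writing $\hat p = p/\nrm{}{p}$, the cosines become $\cos\vartheta_i = \hat p^T g_i/\nrm{}{g_i}$, so the hypothesis \eqref{var1} reads $\hat p^T g_1 \geq \beta \nrm{}{g_1}$ and the goal is $\hat p^T g_2 \geq \tfrac{\beta}{2}\nrm{}{g_2}$. First I would decompose $\hat p^T g_2 = \hat p^T g_1 + \hat p^T(g_2 - g_1)$ and apply Cauchy--Schwarz to the second term, using \eqref{var2} and $\nrm{}{\hat p}=1$, to obtain $\hat p^T g_2 \geq \beta\nrm{}{g_1} - \epsilon$.

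Next I would relate the two norms through the triangle inequality $\nrm{}{g_1} \geq \nrm{}{g_2} - \nrm{}{g_1-g_2} \geq \nrm{}{g_2} - \epsilon$, which combined with the previous bound gives $\hat p^T g_2 \geq \beta\nrm{}{g_2} - (\beta+1)\epsilon$. Dividing by $\nrm{}{g_2} > 0$ reduces the entire claim to verifying the single scalar inequality $(\beta+1)\,\epsilon/\nrm{}{g_2} \leq \beta/2$.

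The final step closes this using \eqref{var3}. Since $\beta$ is a lower bound on $\cos\vartheta_1$, and any cosine is at most $1$, we have $\beta \leq 1$ and hence $\beta+1 \leq 2$; multiplying \eqref{var3} by $(\beta+1)$ then yields $(\beta+1)\,\epsilon/\nrm{}{g_2} \leq 2\cdot(\beta/4) = \beta/2$, which is exactly what is needed. I do not expect a genuine obstacle, since the argument is entirely elementary; the only point requiring care is the explicit use of $\beta \leq 1$, as this is precisely what turns the factor $1/4$ in the hypothesis into the factor $1/2$ in the conclusion. Without it one would be forced into the stronger requirement $\epsilon/\nrm{}{g_2} \leq \beta/\bigl(2(\beta+1)\bigr)$, so the slack afforded by $\beta\le 1$ is what makes the clean bound \eqref{var3} sufficient.
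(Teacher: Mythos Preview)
Your proposal is correct and follows essentially the same route as the paper: both arguments expand $p^T g_2 = p^T g_1 + p^T(g_2-g_1)$, bound the perturbation term by Cauchy--Schwarz and \eqref{var2}, use the triangle inequality $\nrm{}{g_1}\ge\nrm{}{g_2}-\epsilon$, and then close the gap via $\beta\le 1$ together with \eqref{var3}. The paper splits the resulting error into two pieces and bounds them separately (getting $\tfrac34\beta-\tfrac14\beta$), whereas you keep the combined factor $(\beta+1)$ and bound it by $2$ in one stroke; the arithmetic is identical.
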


\begin{proof}
From \eqref{var1} we have
\eqals{
	p^T g_1 \geq \beta \nrm{}{p} \nrm{}{g_1} ,
}
and by \eqref{var2}
\eqals{
	p^T g_2 \geq \nrm{}{p} \bpa{\beta \nrm{}{g_1} - \epsilon} .
}
Hence, by \eqref{var3}
\eqals{
	\cos \vartheta_2  = & \frac{p^T g_2}{\nrm{}{p} \nrm{}{g_2}} 
	\geq \frac{{\beta \nrm{}{g_1} - \epsilon}}{\nrm{}{g_2}} \\
	\geq & \frac{\nrm{}{g_2} - \epsilon}{\nrm{}{g_2}} \beta - \frac{\epsilon}{\nrm{}{g_2}} \\
	\geq & \bpa{1 - \frac{\epsilon}{\nrm{}{g_2}}} \beta - \frac{\beta}{4} .
}
The bound \eqref{var1} implies that $\beta \leq 1$, and hence
\eqals{
	\frac{\epsilon}{\nrm{}{g_2}} \leq \frac{\beta}{4} \leq \frac{1}{4} .
}
Therefore,
\[
	\cos \vartheta_2  \geq \bpa{1 - \frac{\epsilon}{\nrm{}{g_2}}} \beta - \frac{\beta}{4} \geq \frac{\beta}{2} .
\]
\end{proof}

We also need the following well known result \cite{mybook} about the function decrease provided by the Armijo-Wolfe line search.

\begin{lem}
\label{lem_aw_one_step_descent}
Suppose $h: \mb{R}^d \to \mb{R}$ is a continuous differentiable function with an $L$-Lipschitz continuous gradient. Suppose $x \in \mb{R}^d$, and that $p \in \mb{R}^d$ is a descent direction for $h$ at $x$. Let $\theta$ be the angle between $-p$ and $\nabla h(x)$. Suppose $\alpha > 0$ is a step that satisfies the Armijo-Wolfe conditions with parameters $0 < c_1 < c_2 < 1$:
\eqal{   \label{varwolfe}
	h(x+\alpha p) & \leq h(x) + c_1 \alpha p^T \nabla h(x) \\
	p^T \nabla h(x+ \alpha p) & \geq c_2 p^T \nabla h(x) .
}
Then 
\eqals{
	h(x+\alpha p) - h(x) \leq -c_1 \frac{1-c_2}{L} \cos^2 \theta \nrm{}{\nabla h(x)}^2 .
}
\end{lem}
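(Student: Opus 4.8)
The plan is to follow the classical two-step argument: use the curvature (Wolfe) condition together with Lipschitz continuity of $\nabla h$ to produce a lower bound on the stepsize $\alpha$, and then feed this lower bound into the sufficient-decrease (Armijo) condition to obtain the stated bound on the function decrease. No earlier result from the paper is needed; the argument is self-contained in the hypotheses \eqref{varwolfe}.

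First I would start from the Wolfe condition $p^T \nabla h(x+\alpha p) \geq c_2\, p^T \nabla h(x)$ and subtract $p^T\nabla h(x)$ from both sides, yielding $p^T\bigl(\nabla h(x+\alpha p)-\nabla h(x)\bigr) \geq -(1-c_2)\,p^T\nabla h(x)$. The left-hand side is bounded above, via Cauchy--Schwarz and the $L$-Lipschitz continuity of $\nabla h$, by $\nrm{}{p}\,\nrm{}{\nabla h(x+\alpha p)-\nabla h(x)} \leq L\alpha\nrm{}{p}^2$. Combining these and rearranging (using $p^T\nabla h(x)<0$, so that $-(1-c_2)p^T\nabla h(x)>0$) gives the lower bound $\alpha \geq (1-c_2)\bigl(-p^T\nabla h(x)\bigr)/\bigl(L\nrm{}{p}^2\bigr)$.

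Next I would invoke the Armijo condition $h(x+\alpha p)-h(x)\leq c_1\alpha\,p^T\nabla h(x)$. Here the key point --- and the step where sign bookkeeping requires the most care --- is that the coefficient $c_1 p^T\nabla h(x)$ is \emph{negative}, so replacing $\alpha$ by its lower bound makes the right-hand side \emph{larger} in magnitude and hence preserves the inequality in the correct direction. Substituting the lower bound on $\alpha$ therefore yields $h(x+\alpha p)-h(x) \leq -c_1\,\dfrac{(1-c_2)}{L\nrm{}{p}^2}\,\bigl(p^T\nabla h(x)\bigr)^2$.

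Finally I would rewrite this using the angle $\theta$ between $-p$ and $\nabla h(x)$: by the definition of $\theta$ we have $\bigl(p^T\nabla h(x)\bigr)^2 = \nrm{}{p}^2\nrm{}{\nabla h(x)}^2\cos^2\theta$, and the factors of $\nrm{}{p}^2$ cancel, producing exactly $h(x+\alpha p)-h(x)\leq -c_1\frac{1-c_2}{L}\cos^2\theta\,\nrm{}{\nabla h(x)}^2$. There is no genuine obstacle in this argument; the only thing to watch throughout is the direction of the inequalities when dividing by or multiplying through the negative quantity $p^T\nabla h(x)$.
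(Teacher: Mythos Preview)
Your proposal is correct and follows essentially the same argument as the paper's proof: derive the lower bound $\alpha \geq -\frac{1-c_2}{L}\frac{p^T\nabla h(x)}{\nrm{}{p}^2}$ from the Wolfe condition plus Lipschitz continuity, then substitute into the Armijo condition and rewrite in terms of $\cos\theta$. Your write-up is in fact slightly more explicit about the sign considerations than the paper's.
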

\begin{proof}
From the second condition in \eqref{varwolfe} we have
\eqals{
   p^T [\nabla h(x+ \alpha p) - \nabla h(x)]  \geq (c_2  - 1)p^T \nabla h(x) .
}
By Lipschitz continuity,
\eqals{
    p^T [\nabla h(x+ \alpha p) - \nabla h(x)]  \leq L \| p \|^2 \alpha \cm
}
and from this it follows that
\eqals{
	\alpha \geq -\frac{1-c_2}{L} \frac{\nabla h(x)^T p}{\nrm{}{p}^2} .
}
Substituting this into the first condition in \eqref{varwolfe} we obtain the desired result. 
\end{proof}

We can now show that a fraction of the iterates generated by Algorithm~\ref{algo1} produce a decrease in the true objective that is proportional to its gradient. We recall that the constants in the Armijo-Wolfe conditions \eqref{eq_aw_condition} satisfy
$
      0 < c_1 < c_2 <1.
$
\begin{thm}
\label{thm_decrease_per_iter}
Suppose Assumptions \ref{ass_Lip_grad_bounded_below}, \ref{ass_bounded_noise} and \ref{ass_strong_convex} are satisfied, and let $\bst{x_k}$, $\bst{p_k}$ be generated by Algorithm~\ref{algo1}. 
Define $\beta_1$ and $J$ as in {Corollary~\ref{cor_lower_bound_cos_theta}}. Choose $\delta_1, \delta_2, \wh{\delta}_1, \wh{\delta}_2 \in (0, 1)$ such that $\delta_1 + \delta_2 < c_2 - c_1$ and $ \wh{\delta}_1 < c_1, \ \wh{\delta}_2 < 1 - c_2$. If $k \in J$ and 
\eqal{
	\label{eq_conditions_ensuring_decrease_per_iter}
	\nrm{}{\nabla \phi(x_k)} \geq \max \Bigg\{A  \frac{\sqrt{{M}\epsilon_f}}{\beta_1} \cm B  \frac{\epsilon_g}{\beta_1}  \Bigg\} ,
}
where
\eqals{
	A & = \max \bst{ \sqrt{\frac{32}{(1-{c}_2+\delta_2) \delta_1}} \cm \sqrt{\frac{32}{\wh{\delta}_1 (1-c_2)}}} \\
	B & = \max \bst{\frac{4 ({c}_1 + \delta_1)}{\delta_1}  \cm \frac{2(1+{c}_2-\delta_2)}{\delta_2} \cm \frac{8}{(1-c_2)} \cm \frac{4 c_1}{\wh{\delta}_1} \cm \frac{2(1+c_2)}{\wh{\delta}_2}} \cm
}
then there exists a stepsize $\alpha_k$ which satisfies the Armijo-Wolfe conditions for $(f, g)$ with parameters $(c_1, c_2)$, i.e.,
\eqals{
	f(x_k + \alpha_k p_k) & \leq f(x_k) + c_1 \alpha_k p_k^T g(x_k) \\
	p_k^T g(x_k + \alpha_k p_k) & \geq c_2 p_k^T g(x_k) \cm
}
and \textbf{any} such stepsize also satisfies the Armijo-Wolfe conditions for $(\phi, \nabla \phi)$ with parameters $(c_1 - \wh \delta_1, c_2 + \wh \delta_2)$:
\eqals{
	\phi(x_k + \alpha_k p_k) & \leq \phi(x_k) + ({c_1 - \wh \delta_1}) \alpha_k p_k^T \nabla \phi(x_k) \\
	p_k^T \nabla \phi(x_k + \alpha_k p_k) & \geq ({c_2 + \wh \delta_2})p_k^T\nabla \phi(x_k)
}
and in addition,
\eqal{   \label{city}
	\phi(x_{k+1}) - \phi(x_k) \leq - \frac{(c_1 - \wh{\delta}_1)\bbr{1-(c_2 + \wh{\delta}_2)}\beta_1^2}{4{M}} \nrm{}{\nabla \phi(x_k)}^2 .
}
\end{thm}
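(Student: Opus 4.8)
The plan is to assemble the preceding results into a single chain: use Corollary~\ref{cor_lower_bound_cos_theta} to control $\cos\theta_k$, transfer that control to $\cos\wt{\theta}_k$ via Lemma~\ref{lemma_bound_theta_tilde}, feed both angle bounds into Theorems~\ref{thm_existence_of_aw_step} and~\ref{thm_goodness_of_aw_step} to obtain a stepsize that is Armijo--Wolfe for both $(f,g)$ and $(\phi,\nabla\phi)$, and finally quantify the decrease with Lemma~\ref{lem_aw_one_step_descent}. Since $k\in J$, Corollary~\ref{cor_lower_bound_cos_theta} gives $\cos\theta_k\geq\beta_1$ immediately; the whole difficulty is that the hypotheses \eqref{who} and \eqref{tennis} also involve $\cos\wt{\theta}_k$, which has not yet been bounded away from zero.

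First I would bound $\cos\wt{\theta}_k$ from below. I would apply Lemma~\ref{lemma_bound_theta_tilde} with $p\gets p_k$, $g_1\gets -g_k$, $g_2\gets-\nabla\phi(x_k)$, so that $\vartheta_1=\theta_k$, $\vartheta_2=\wt{\theta}_k$, $\beta\gets\beta_1$, and $\epsilon\gets\epsilon_g$ (using $\nrm{}{g_k-\nabla\phi(x_k)}\leq\epsilon_g$ from Assumption~\ref{ass_bounded_noise}). The hypothesis \eqref{var3} then reads $\epsilon_g/\nrm{}{\nabla\phi(x_k)}\leq\beta_1/4$, and this follows from \eqref{eq_conditions_ensuring_decrease_per_iter} because the term $8/(1-c_2)$ inside $B$ forces $\nrm{}{\nabla\phi(x_k)}\geq 8\epsilon_g/\beta_1>4\epsilon_g/\beta_1$. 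Lemma~\ref{lemma_bound_theta_tilde} yields $\cos\wt{\theta}_k\geq\beta_1/2$, which in particular certifies that $p_k$ is a descent direction for $\phi$ (needed later for Lemma~\ref{lem_aw_one_step_descent}).

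With $\cos\theta_k\geq\beta_1$ and $\cos\wt{\theta}_k\geq\beta_1/2$ in hand, I would verify that \eqref{eq_conditions_ensuring_decrease_per_iter} implies both \eqref{who} and \eqref{tennis}. This is the bookkeeping heart of the proof: in each right-hand side I replace $\cos\theta_k$ by the smaller $\beta_1$ and $\cos\wt{\theta}_k$ by the smaller $\beta_1/2$, which can only enlarge the bounds, and then check term by term that each resulting quantity is dominated by $\max\{A\sqrt{M\epsilon_f}/\beta_1,\ B\epsilon_g/\beta_1\}$. The constants $A$ and $B$ are exactly the componentwise maxima of the coefficients surviving this substitution: the $\epsilon_g$-coefficients $4(c_1+\delta_1)/\delta_1$, $2(1+c_2-\delta_2)/\delta_2$, $8/(1-c_2)$, $4c_1/\wh{\delta}_1$, $2(1+c_2)/\wh{\delta}_2$ collect into $B$, while the two $\sqrt{M\epsilon_f}$-coefficients collect into $A$. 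In the square-root terms the product $\cos\theta_k\cos\wt{\theta}_k\geq\beta_1^2/2$, so the factor $16$ becomes $32$ (matching the $32$ inside $A$) and the denominator becomes $\beta_1^2$, which is where the overall $1/\beta_1$ outside the root comes from. Theorem~\ref{thm_existence_of_aw_step} then guarantees existence of a stepsize $\alpha_k$ satisfying the Armijo--Wolfe conditions for $(f,g)$ with $(c_1,c_2)$, and Theorem~\ref{thm_goodness_of_aw_step} guarantees that \emph{any} such $\alpha_k$ satisfies them for $(\phi,\nabla\phi)$ with $(c_1-\wh{\delta}_1,c_2+\wh{\delta}_2)$; the restrictions $\wh{\delta}_1<c_1$ and $\wh{\delta}_2<1-c_2$ ensure $0<c_1-\wh{\delta}_1<c_2+\wh{\delta}_2<1$.

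Finally I would invoke Lemma~\ref{lem_aw_one_step_descent} with $h\gets\phi$, $L\gets M$, descent direction $p_k$, and parameters $(c_1-\wh{\delta}_1,c_2+\wh{\delta}_2)$; the angle appearing there is precisely $\wt{\theta}_k$. Substituting $x_{k+1}=x_k+\alpha_k p_k$ and $\cos^2\wt{\theta}_k\geq\beta_1^2/4$ produces exactly \eqref{city}. I expect the only genuine obstacle to be the constant-matching of the third paragraph — tracking which coefficient lands in $A$ versus $B$ and confirming that the $\beta_1/2$ factor from $\cos\wt{\theta}_k$ is absorbed correctly — since every other step is a direct citation of an already-established result.
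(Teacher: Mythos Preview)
Your proposal is correct and follows essentially the same route as the paper's own proof: bound $\cos\theta_k\geq\beta_1$ via Corollary~\ref{cor_lower_bound_cos_theta}, transfer this to $\cos\wt{\theta}_k\geq\beta_1/2$ via Lemma~\ref{lemma_bound_theta_tilde}, substitute these angle bounds into \eqref{who} and \eqref{tennis} to check that \eqref{eq_conditions_ensuring_decrease_per_iter} implies both, invoke Theorems~\ref{thm_existence_of_aw_step} and~\ref{thm_goodness_of_aw_step}, and conclude with Lemma~\ref{lem_aw_one_step_descent}. The only cosmetic difference is that the paper extracts the inequality $\nrm{}{\nabla\phi(x_k)}\geq 4\epsilon_g/\beta_1$ from the term $4(c_1+\delta_1)/\delta_1$ in $B$ rather than from $8/(1-c_2)$ as you do; either works.
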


\begin{proof}
Take $k \in J$. By Corollary~\ref{cor_lower_bound_cos_theta} we have that
$\cos \theta_k \geq \beta_1$. Now, by \eqref{eq_conditions_ensuring_decrease_per_iter}
\eqals{
	\nrm{}{\nabla \phi(x_k)} \geq B  \frac{\epsilon_g}{\beta_1} \geq \frac{4 ({c}_1 + \delta_1)}{\delta_1}  \frac{\epsilon_g}{\beta_1}  \geq 4  \frac{\epsilon_g}{\beta_1}  ,
}
which together with Lemma \ref{lemma_bound_theta_tilde} and {Assumption~\ref{ass_strong_convex}} implies that $\cos \wt{\theta}_k \geq \beta_1/2$. Therefore, $p_k = - H_k g(x_k)$ is a descent direction with respect to both $g(x_k)$ and $\nabla \phi(x_k)$, which will enable us to apply 
Theorems \ref{thm_existence_of_aw_step} and \ref{thm_goodness_of_aw_step}. 

Before doing so, we need to verify that the assumptions of those two theorems are satisfied, namely \eqref{who} and \eqref{tennis}.
To see this, note that since we have shown that
\eqals{
	\cos \theta_k \geq \beta_1, \quad \cos \wt{\theta}_k \geq \frac{\beta_1}{2}
}
then from \eqref{eq_conditions_ensuring_decrease_per_iter} it follows that
\eqals{
& \nrm{}{\nabla \phi(x_k)}\\
\geq ~ &\max \Bigg\{A  \frac{\sqrt{{M}\epsilon_f}}{\beta_1} \cm B  \frac{\epsilon_g}{\beta_1}  \Bigg\} \\
\geq ~ &\max \bst{\frac{4 ({c}_1 + \delta_1)\epsilon_g}{\delta_1 \beta_1} \cm \frac{2(1+{c}_2-\delta_2)\epsilon_f}{\delta_2 \beta_1} \cm \sqrt{\frac{32 M \epsilon_f}{(1-{c}_2+\delta_2) \delta_1 \beta_1^2}}
}\\
\geq ~ & \max \bst{\frac{4 ({c}_1 + \delta_1)\epsilon_g}{\delta_1 \cos \theta_k} \cm \frac{2(1+{c}_2-\delta_2)\epsilon_f}{\delta_2 \cos \theta_k} \cm \sqrt{\frac{16 M \epsilon_f}{(1-{c}_2+\delta_2) \delta_1 \cos \theta_k \cos \wt{\theta}_k}}} ,
}
as well as
\eqals{
	&\nrm{}{\nabla \phi(x_k)} \\
	\geq ~ & \max \Bigg\{A  \frac{\sqrt{{M}\epsilon_f}}{\beta_1} \cm B  \frac{\epsilon_g}{\beta_1}  \Bigg\} \\
	\geq ~ & \max \bst{\frac{8\epsilon_g}{(1-c_2)\beta_1} \cm \frac{4 c_1\epsilon_g}{\wh{\delta}_1\beta_1} \cm \frac{2(1+c_2)\epsilon_g}{\wh{\delta}_2\beta_1} \cm \sqrt{\frac{32 M \epsilon_f}{\wh{\delta}_1 (1-c_2) \beta_1^2}}} \\
	\geq ~ &\max \bst{\frac{8\epsilon_g}{(1-c_2)\cos \theta_k} \cm \frac{2 c_1\epsilon_g}{\wh{\delta}_1 \cos \wt{\theta}_k} \cm \frac{(1+c_2)\epsilon_g}{\wh{\delta}_2 \cos \wt{\theta_k}} \cm \sqrt{\frac{16 M \epsilon_f}{\wh{\delta}_1 (1-c_2) \cos \theta_k \cos \wt{\theta_k}}}} .
}

Therefore, by Theorems \ref{thm_existence_of_aw_step} and \ref{thm_goodness_of_aw_step} there exists a stepsize $\alpha_k$ which satisfies the Armijo-Wolfe conditions for $(f, g)$ with parameters $(c_1, c_2)$, and such $\alpha_k$ also satisfies the Armijo-Wolfe conditions for $(\phi, \nabla \phi)$ with parameters $(c_1 - \wh{\delta}_1, c_2 + \wh{\delta}_2)$. 
We then apply Lemma \ref{lem_aw_one_step_descent} with $h(\cdot) \gets \phi(\cdot)$, $\theta \gets \wt{\theta}_k$ and $L \gets M$, Armijo-Wolfe parameters $(c_1 - \wh \delta_1, c_2 + \wh \delta_2)$ to obtain
	\begin{align*}
		\phi(x_{k+1}) - \phi(x_k)& \leq - \frac{(c_1 - \wh{\delta}_1)\bbr{1-(c_2 + \wh{\delta}_2)}}{{M}} ~ \cos^2 \wt{\theta}_k \nrm{}{\nabla \phi(x_k)}^2  \\
	& \leq - \frac{(c_1 - \wh{\delta}_1)\bbr{1-(c_2 + \wh{\delta}_2)}\beta_1^2}{4{M}} \nrm{}{\nabla \phi(x_k)}^2 .
	\end{align*}
\end{proof}

The constants $A, B$, as well as the rate constant in \eqref{city}, do not depend on the objective function or the noise level, but only on the parameters $c_1, c_2$. There is, nevertheless, some freedom in the specification of $A, B$ and the constant in \eqref{city} through the choices of  $\delta_1, \delta_2, \wh \delta_1, \wh \delta_2$. From now on, we make a specific choice for the latter four constants, which simplifies Theorem~\ref{thm_decrease_per_iter}, as shown next.

\begin{cor}
\label{cor_decrease_per_iter_for_specfic_deltas}
Suppose Assumptions \ref{ass_Lip_grad_bounded_below}, \ref{ass_bounded_noise} and \ref{ass_strong_convex} are satisfied, and let $\bst{x_k}$ be generated by Algorithm \ref{algo1}. Choose $\delta_1, \delta_2, \wh \delta_1, \wh \delta_2$ as 
\eqal{
	\label{eq_value_for_deltas}
	\delta_1  = \frac{c_2-c_1}{4} \cm \ \delta_2  = \frac{c_2-c_1}{4} \cm \ \wh \delta_1 & = \frac{c_1}{2} \cm \ \wh \delta_2  = \frac{1-c_2}{2}	.
} 
If $k \in J$ and 
\eqals{
	\nrm{}{\nabla \phi(x_k)} \geq \max \Bigg\{ &A  \frac{\sqrt{{M}\epsilon_f}}{\beta_1} \cm B  \frac{\epsilon_g}{\beta_1}  \Bigg\} \cm
}
where
\eqal{
	\label{eq_def_for_A_and_B_with_specific_deltas}
	A & = \max \left\{\frac{16 \sqrt{2}}{\sqrt{(c_2-c_1) (4-c_1-3 c_2)}},\frac{8}{\sqrt{c_1(1-c_2)}}\right\} \\
	B & = \max \left\{\frac{8}{1-c_2},\frac{8 (1+c_1)}{c_2-c_1}+6\right\} ,
}
then there exists a stepsize $\alpha_k$ which satisfies the Armijo-Wolfe conditions on $(f, g)$ with parameters $(c_1, c_2)$, i.e.,
\eqals{
	f(x_k + \alpha_k p_k) & \leq f(x_k) + c_1 \alpha_k p_k^T g(x_k) \\
	p_k^T g(x_k + \alpha_k p_k) & \geq c_2 p_k^T g(x_k) ,
}
and \textbf{any} such stepsize also satisfies the Armijo-Wolfe conditions on $(\phi, \nabla \phi)$ with parameters $(c_1/2, c_2/2 + 1)$:
\eqals{
	\phi(x_k + \alpha_k p_k) & \leq \phi(x_k) + \frac{c_1}{2} \alpha_k p_k^T \nabla \phi(x_k) \\
	p_k^T \nabla \phi(x_k + \alpha_k p_k) & \geq \frac{1+c_2}{2} p_k^T\nabla \phi(x_k) ,
}
and in addition,
\eqals{
	\phi(x_{k+1}) - \phi(x_k) \leq -\frac{c_1(1-c_2) \beta_1^2}{16 M} \nrm{}{\nabla \phi(x_k)}^2 .
}
\end{cor}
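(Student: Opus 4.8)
The plan is to obtain this corollary as a direct specialization of Theorem~\ref{thm_decrease_per_iter}: the values in \eqref{eq_value_for_deltas} satisfy the hypotheses of that theorem, so every conclusion carries over verbatim, and all that remains is to simplify the constants $A$, $B$, the line-search parameters, and the rate constant in \eqref{city}. First I would check admissibility of the choices: $\delta_1+\delta_2 = (c_2-c_1)/2 < c_2-c_1$, while $\wh{\delta}_1 = c_1/2 < c_1$ and $\wh{\delta}_2 = (1-c_2)/2 < 1-c_2$, and all four lie in $(0,1)$ because $0 < c_1 < c_2 < 1$. This licenses Theorem~\ref{thm_decrease_per_iter} with these particular constants.

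Next I would substitute into the expression for $A$. Plugging $\delta_1 = \delta_2 = (c_2-c_1)/4$ into $(1-c_2+\delta_2)\delta_1$ gives $\tfrac{1}{16}(c_2-c_1)(4-c_1-3c_2)$, so the first term $\sqrt{32/[(1-c_2+\delta_2)\delta_1]}$ becomes $16\sqrt{2}/\sqrt{(c_2-c_1)(4-c_1-3c_2)}$; and using $\wh{\delta}_1 = c_1/2$ the second term $\sqrt{32/[\wh{\delta}_1(1-c_2)]}$ becomes $8/\sqrt{c_1(1-c_2)}$. This reproduces the $A$ in \eqref{eq_def_for_A_and_B_with_specific_deltas}, so that part is pure arithmetic.

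The one step requiring a genuine argument is showing that the five-term maximum defining $B$ in Theorem~\ref{thm_decrease_per_iter} collapses to the two-term maximum in \eqref{eq_def_for_A_and_B_with_specific_deltas}. I would first evaluate each term under \eqref{eq_value_for_deltas}: the term $2(1+c_2-\delta_2)/\delta_2$ simplifies \emph{exactly} to $\tfrac{8+2c_1+6c_2}{c_2-c_1} = \tfrac{8(1+c_1)}{c_2-c_1}+6$, the term $4(c_1+\delta_1)/\delta_1$ simplifies to $\tfrac{12c_1+4c_2}{c_2-c_1}$, the term $4c_1/\wh{\delta}_1$ equals $8$, and $2(1+c_2)/\wh{\delta}_2$ equals $4(1+c_2)/(1-c_2)$; the fifth candidate is $8/(1-c_2)$. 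I would then dominate: the $4(c_1+\delta_1)/\delta_1$ term is bounded by the $2(1+c_2-\delta_2)/\delta_2$ term because $(8+2c_1+6c_2) - (12c_1+4c_2) = 2c_2 - 10c_1 + 8 > 8(1-c_1) > 0$ using $c_2 > c_1$; and both $8$ and $4(1+c_2)/(1-c_2)$ are bounded by $8/(1-c_2)$ since $1-c_2 \le 1$ and $1+c_2 \le 2$, respectively. Hence $B = \max\{8/(1-c_2),\ 8(1+c_1)/(c_2-c_1)+6\}$, matching \eqref{eq_def_for_A_and_B_with_specific_deltas}. This domination bookkeeping is the main (and only non-mechanical) obstacle.

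Finally I would record the remaining conclusions. The true-function Armijo–Wolfe parameters $(c_1-\wh{\delta}_1,\ c_2+\wh{\delta}_2)$ become $(c_1/2,\ (1+c_2)/2)$, and substituting $c_1-\wh{\delta}_1 = c_1/2$ together with $1-(c_2+\wh{\delta}_2) = (1-c_2)/2$ into the rate constant of \eqref{city} gives $(c_1/2)((1-c_2)/2)\beta_1^2/(4M) = c_1(1-c_2)\beta_1^2/(16M)$, which is precisely the claimed per-iteration decrease. Everything beyond the $B$ simplification is direct substitution into Theorem~\ref{thm_decrease_per_iter}, so no new estimate is needed.
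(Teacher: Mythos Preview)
Your proposal is correct and follows exactly the paper's approach: verify that the choices \eqref{eq_value_for_deltas} meet the hypotheses of Theorem~\ref{thm_decrease_per_iter}, then substitute and simplify. In fact you supply more detail than the paper does, since the paper asserts the collapse of the five-term maximum for $B$ to two terms without writing out the domination argument that you give.
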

\begin{proof}
We begin by verifying that the choices \eqref{eq_value_for_deltas} of $\delta_1, \delta_2, \wh \delta_1, \wh \delta_2$ satisfy the requirements in Theorem \ref{thm_decrease_per_iter}. It is clear that $\delta_1, \delta_2, \wh \delta_1, \wh \delta_2 \in (0, 1)$ since $0 < c_1 <c_2 < 1$. We also have
\eqals{
	\delta_1 + \delta_2 = \frac{c_2 - c_1}{2} < c_2 - c_1 \cm \quad  \wh{\delta}_1 = \frac{c_1}{2} < c_1 \cm \quad  \wh{\delta}_2 = \frac{1-c_2}{2} < 1 - c_2 .
}
Applying Theorem \ref{thm_decrease_per_iter} with the choices  \eqref{eq_value_for_deltas}, we have
\eqals{
	A & = \max \bst{ \sqrt{\frac{32}{(1-{c}_2+\delta_2) \delta_1}} \cm \sqrt{\frac{32}{\wh{\delta}_1 (1-c_2)}}} \\
	& = \max \left\{\frac{16 \sqrt{2}}{\sqrt{(c_2-c_1) (4-c_1-3 c_2)}},\frac{8}{\sqrt{c_1(1-c_2)}}\right\} \\
	& ~ \\
	B & = \max \bst{\frac{4 ({c}_1 + \delta_1)}{\delta_1}  \cm \frac{2(1+{c}_2-\delta_2)}{\delta_2} \cm \frac{8}{(1-c_2)} \cm \frac{4 c_1}{\wh{\delta}_1} \cm \frac{2(1+c_2)}{\wh{\delta}_2}}\\
	& = \max \left\{\frac{8}{1-c_2},\frac{8 (1+c_1)}{c_2-c_1}+6\right\}.
}
Therefore, by Theorem \ref{thm_decrease_per_iter} we know that there exists a stepsize $\alpha_k$ which satisfies the Armijo-Wolfe conditions for $(f, g)$ with parameters $(c_1, c_2)$, and any such stepsize also satisfies the Armijo-Wolfe conditions for $(\phi, \nabla \phi)$ with parameters $(c_1 - \wh \delta_1, c_2 + \wh \delta_2) = (c_1/2, c_2/2 + 1)$. In addition, we also have
\begin{align*}
	\phi(x_{k+1}) - \phi(x_k) & \leq - \frac{(c_1 - \wh{\delta}_1)\bbr{1-(c_2 + \wh{\delta}_2)}\beta_1^2}{4{M}} \nrm{}{\nabla \phi(x_k)}^2 \\
	& = -\frac{c_1(1-c_2) \beta_1^2}{16 M} \nrm{}{\nabla \phi(x_k)}^2 .
\end{align*}
\end{proof}

\subsection{Convergence Results}
We are ready to state the main convergence results for our algorithm, which is simply  Algorithm~\ref{algo1} using a lengthening parameter $l$ such that 
\begin{equation}  \label{long}
      l >2\epsilon_g/{m},
\end{equation}
where $\epsilon_g$ is the maximum error in the gradient and $m$ is the strong convexity parameter.  Although knowledge of these two constants may not always be available in practice,  there are various procedures for estimating them, as discussed in Section~\ref{sec:numerical}.

We begin by establishing some monotonicity results for the true objective function $\phi$. Note that since Algorithm~\ref{algo1} either computes a zero step (when $\alpha^*=0$) or generates a new iterate that satisfies the Armijo decrease \eqref{eq_aw_condition_noised}, the sequence $\{f(x_k)\}$ is non-increasing. 
\begin{thm}
\label{thm_min_phi_k}
Suppose Assumption \ref{ass_bounded_noise} is satisfied, and let $\bst{x_k}$ be generated by Algorithm~\ref{algo1} with $l$ satisfying \eqref{long}. Define
\eqal{   \label{french}
	\xi_k = \min_{i \in [k]} ~ \phi(x_i) \cm \quad \text{where} ~ [k] \eqdef ~ \bst{i \in \mb{N}| 0 \leq i \leq k}.
}
Then $\bst{\xi_k}$ is non-increasing
and 
\eqals{
	\xi_k \leq \phi(x_k) \leq \xi_k + 2 \epsilon_f \cm \  \forall k \in \mb{N} .
}
\end{thm}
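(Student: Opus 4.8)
The plan is to dispatch this in two independent pieces, exploiting the monotonicity of the noisy function values $\{f(x_k)\}$ that was observed in the sentence preceding the theorem. First I would record why $\{f(x_k)\}$ is non-increasing: at each iteration Algorithm~\ref{algo1} either sets $\alpha_k = 0$, in which case $x_{k+1}=x_k$ and $f(x_{k+1})=f(x_k)$, or it accepts a stepsize $\alpha_k>0$ satisfying the Armijo condition $f(x_{k+1}) \le f(x_k) + c_1\alpha_k p_k^T g(x_k)$; since $p_k$ is a descent direction for $f$ at $x_k$ (i.e. $p_k^T g(x_k)<0$) and $c_1,\alpha_k>0$, the added term is non-positive and $f(x_{k+1})\le f(x_k)$. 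Hence $f(x_i)\le f(x_j)$ whenever $i\ge j$.

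The monotonicity of $\{\xi_k\}$ and the left inequality are then essentially definitional and require almost no work. Because $[k]\subseteq [k+1]$, minimizing $\phi$ over the larger index set can only decrease the minimum, so $\xi_{k+1}\le \xi_k$. Likewise, since $k\in[k]$, the value $\phi(x_k)$ is one of the terms in the minimization defining $\xi_k$, giving immediately $\xi_k=\min_{i\in[k]}\phi(x_i)\le \phi(x_k)$.

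The substantive inequality is the right bound $\phi(x_k)\le \xi_k+2\epsilon_f$, and here the idea is to transfer the monotonicity of $f$ over to $\phi$ at the cost of the function-error bound $\epsilon_f$. Let $i^\ast\in[k]$ be an index attaining the minimum, so $\xi_k=\phi(x_{i^\ast})$ with $i^\ast\le k$. Using Assumption~\ref{ass_bounded_noise} to pass from $\phi$ to $f$ at $x_k$, then the monotonicity $f(x_k)\le f(x_{i^\ast})$ (valid since $i^\ast\le k$), and finally Assumption~\ref{ass_bounded_noise} once more to pass from $f$ back to $\phi$ at $x_{i^\ast}$, I would chain
\[
   \phi(x_k) \le f(x_k)+\epsilon_f \le f(x_{i^\ast})+\epsilon_f \le \phi(x_{i^\ast})+2\epsilon_f = \xi_k + 2\epsilon_f ,
\]
which is the desired estimate.

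There is essentially no hard obstacle here; the only point demanding care is conceptual rather than technical, namely recognizing that the algorithm controls the \emph{noisy} objective $f$ monotonically while the theorem speaks about the \emph{true} objective $\phi$, and that each of the two translations between $f$ and $\phi$ costs exactly $\epsilon_f$, accounting for the $2\epsilon_f$ gap. I note that this argument uses only Assumption~\ref{ass_bounded_noise} and the structure of the line search, consistent with the hypotheses stated; the lengthening condition \eqref{long} plays no role in this particular result and is inherited only because it is part of the algorithm's specification.
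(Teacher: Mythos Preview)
Your argument is correct and follows essentially the same route as the paper: both use the monotonicity of $\{f(x_k)\}$ and apply Assumption~\ref{ass_bounded_noise} twice (once in each direction) to obtain the $2\epsilon_f$ gap. Your version is slightly more explicit in identifying a minimizing index $i^\ast$ and in justifying why $p_k^Tg(x_k)<0$, but the substance is identical.
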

\begin{proof}
By definition, $\bst{\xi_j}$  forms a non-increasing sequence, and we noted above that $\bst{f(x_k)}$ is also non-increasing and therefore 
\eqals{
	f(x_{j}) = \min_{i \in [j]} ~ f(x_i) .
}
By Assumption~\ref{ass_bounded_noise} we have
\eqals{
	f(x_i) \leq \phi(x_i) + \epsilon_f .
}
Hence
\eqals{
	f(x_{j}) = \min_{i \in [j]} ~ f(x_i) \leq \min_{i \in [j]} ~ \bpa{\phi(x_i) +  \epsilon_f} = \min_{i \in [j]} ~ \phi(x_i) +  \epsilon_f ,
}
and recalling again Assumption~\ref{ass_bounded_noise}, we have
\eqals{
	\phi(x_j) \leq f(x_j) + \epsilon_f \leq \min_{i \in [j]} ~ \phi(x_i) +  2\epsilon_f .
}
Since
\eqals{
	\xi_j= \min_{i \in [j]} ~ \phi(x_i) \leq \phi(x_j) \cm
}
we conclude that 
\[
	\xi_j \leq \phi(x_j) \leq \xi_j + 2\epsilon_f .
\]

\end{proof}

The next result shows that, before the iterates $\bst{x_k}$ reach a neighborhood of the solution where the error dominates, the sequence $\bst{\phi(x_k)- \phi^*}$ converges to the value $2 \epsilon_f$  at an R-linear rate. Here $\phi^*$ denotes the optimal value of $\phi$.

\begin{thm}
\label{thm_r_linear_convergence_before_hitting_N1}
{\rm [Linear Convergence to $\mathcal{N}_1$]}
Suppose Assumptions~\ref{ass_Lip_grad_bounded_below}, \ref{ass_bounded_noise} and \ref{ass_strong_convex} are satisfied, and  let $\bst{x_k}$ be generated by Algorithm~\ref{algo1} with the choice \eqref{long}. Let
\eqals{
	\mathcal{N}_1 = \bst{x \Bigg|\nrm{}{\nabla \phi(x)} \leq \max \Bigg\{ A \frac{\sqrt{{M}\epsilon_f}}{\beta_1} \cm B  \frac{\epsilon_g}{\beta_1}  \Bigg\}},
}
where $A, B$ are given in \eqref{eq_def_for_A_and_B_with_specific_deltas}. 
Let
\eqals{
	K = \min_{k} ~ \bst{k \in \mb{N}|x_k \in \mathcal{N}_1}
}
be the index of the first iterate that enters $\mathcal{N}_1$ (we define $K = +\infty$ if no such iterate exists). Then there exists $\rho \in (0, 1)$ such that
\eqals{
	\phi(x_k) - \phi^* \leq \rho^k ~ (\phi(x_0) - \phi^*) + 2\epsilon_f \cm \ \forall k \leq K -1 .
}
\end{thm}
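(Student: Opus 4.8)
The plan is to treat $f(x_k)-\phi^*$ as the Lyapunov function, rather than $\phi(x_k)-\phi^*$: the sequence $\{f(x_k)\}$ is genuinely non-increasing (each step either satisfies the Armijo condition on $(f,g)$ or is a zero step), whereas $\{\phi(x_k)\}$ need not be monotone in the presence of noise. First I would observe that, because $K$ is the first index entering $\mathcal{N}_1$, every iterate $x_0,\dots,x_k$ with $k\le K-1$ lies outside $\mathcal{N}_1$; hence at each \emph{good} iterate $j\in J$ with $j\le K-1$ the hypothesis of Corollary~\ref{cor_decrease_per_iter_for_specfic_deltas} is met, yielding $\phi(x_{j+1})-\phi(x_j)\le -\tfrac{c_1(1-c_2)\beta_1^2}{16M}\nrm{}{\nabla\phi(x_j)}^2$.

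Next I would convert this $\phi$-decrease into an $f$-decrease. Writing $f(x_{j+1})-f(x_j)\le \phi(x_{j+1})-\phi(x_j)+2\epsilon_f$ and using the bound $\nrm{}{\nabla\phi(x_j)}\ge A\sqrt{M\epsilon_f}/\beta_1$ valid outside $\mathcal{N}_1$, the specific choice $A\ge 8/\sqrt{c_1(1-c_2)}$ in \eqref{eq_def_for_A_and_B_with_specific_deltas} makes the decrease term dominate the $2\epsilon_f$ error (indeed $\tfrac{c_1(1-c_2)\beta_1^2}{16M}\nrm{}{\nabla\phi(x_j)}^2\ge 4\epsilon_f$), leaving $f(x_{j+1})-f(x_j)\le -\tfrac{c_1(1-c_2)\beta_1^2}{32M}\nrm{}{\nabla\phi(x_j)}^2$. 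I would then invoke $m$-strong convexity (Assumption~\ref{ass_strong_convex}) in the form $\nrm{}{\nabla\phi(x_j)}^2\ge 2m(\phi(x_j)-\phi^*)$, together with $\phi(x_j)-\phi^*\ge \tfrac12\bpa{f(x_j)-\phi^*}$ (which again holds because the gradient, hence $f(x_j)-\phi^*$, is large outside $\mathcal{N}_1$), to arrive at a contraction $f(x_{j+1})-\phi^*\le \eta\,\bpa{f(x_j)-\phi^*}$ with $\eta=1-\tfrac{m c_1(1-c_2)\beta_1^2}{32M}\in(0,1)$ at good iterates.

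At the remaining (bad) iterates I would use only $f(x_{j+1})\le f(x_j)$, i.e.\ a per-step factor $\le 1$, noting that $f(x_j)-\phi^*>0$ for all $j\le K-1$ so the signs are consistent and the factors chain into a product. Since Corollary~\ref{cor_lower_bound_cos_theta} guarantees $\abs{J_k}\ge qk$, multiplying the per-step factors over $j=0,\dots,k-1$ gives $f(x_k)-\phi^*\le \eta^{qk}\bpa{f(x_0)-\phi^*}$; setting $\rho=\eta^{\,q}\in(0,1)$ yields $f(x_k)-\phi^*\le \rho^{k}\bpa{f(x_0)-\phi^*}$. Finally I would pass back to $\phi$ using $\abs{\phi-f}\le\epsilon_f$ twice, namely $\phi(x_k)-\phi^*\le f(x_k)-\phi^*+\epsilon_f$ and $f(x_0)-\phi^*\le \phi(x_0)-\phi^*+\epsilon_f$, which (since $\rho^k\le1$) produces exactly $\phi(x_k)-\phi^*\le\rho^k(\phi(x_0)-\phi^*)+2\epsilon_f$.

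The main obstacle is conceptual rather than computational: $\phi$ itself may increase on bad iterates, so a naive $\phi$-based telescoping would accumulate one $2\epsilon_f$ error per bad iterate and fail to give a bounded additive term. The crux is therefore to run the entire contraction argument on the monotone surrogate $f$ and pass to $\phi$ only at the very end, and to verify that the per-iteration noise $2\epsilon_f$ at good iterates is genuinely swallowed by the guaranteed decrease — which is precisely where the constant $A$ of \eqref{eq_def_for_A_and_B_with_specific_deltas} (equivalently, being bounded away from $\mathcal{N}_1$) enters. Theorem~\ref{thm_min_phi_k} can alternatively furnish the closing step through the relation $\xi_k\le\phi(x_k)\le\xi_k+2\epsilon_f$.
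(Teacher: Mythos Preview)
Your argument is correct, and it takes a route parallel to but distinct from the paper's. Both proofs hinge on the same insight you articulate: $\phi(x_k)$ itself need not be monotone, so one must run the contraction on a monotone surrogate and convert back to $\phi$ only at the end. The difference is in the choice of surrogate. The paper uses $\xi_k=\min_{i\le k}\phi(x_i)$, invoking Theorem~\ref{thm_min_phi_k} to get $\phi(x_j)\le\xi_j+2\epsilon_f$; then the good-iterate decrease $\phi(x_{j+1})-\phi(x_j)\le-\zeta\nrm{}{\nabla\phi(x_j)}^2$ combined with $\zeta\nrm{}{\nabla\phi(x_j)}^2\ge 4\epsilon_f$ yields $\xi_{j+1}-\phi^*\le(1-m\zeta)(\xi_j-\phi^*)$ directly, with $\zeta=c_1(1-c_2)\beta_1^2/(16M)$. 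You instead use $f(x_k)$, which is monotone by construction of the algorithm, and pay for the conversion $\phi\to f$ (and later $\nrm{}{\nabla\phi}^2\ge 2m(\phi-\phi^*)\ge m(f-\phi^*)$) by one extra factor of $2$ in the contraction constant, arriving at $\eta=1-m\zeta/2$. Since the statement only asserts the existence of some $\rho\in(0,1)$, this is immaterial. Your route has the minor advantage of being self-contained (it does not need Theorem~\ref{thm_min_phi_k} as a separate lemma), while the paper's route gives the slightly sharper rate and makes the role of $\xi_k$ explicit, which is reused in the subsequent results.
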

\begin{proof}
By definition, we have that $ \forall k \leq K-1$ 
\eqal{   \label{ha}
	\nrm{}{\nabla \phi(x_k)} > \max \Bigg\{ A  \frac{\sqrt{{M}\epsilon_f}}{\beta_1} \cm B  \frac{\epsilon_g}{\beta_1}  \Bigg\} .
}
Choose $0 \leq j \leq k \leq K-1$, and let $J$ be as defined in Corollary~\ref{cor_lower_bound_cos_theta}. {If} ${ j \in J}$, then by Corollary~\ref{cor_decrease_per_iter_for_specfic_deltas} we have
\eqals{
	\phi(x_{j+1}) - \phi(x_j) \leq -\zeta \nrm{}{\nabla \phi(x_j)}^2
}
where 
\eqals{
	\zeta =  \frac{c_1(1-c_2)\beta_1^2}{16{M}} .
}
By Theorem~\ref{thm_min_phi_k}, we have that $\phi(x_j) \leq \xi_j + 2 \epsilon_f$, and hence
\eqals{
	\phi(x_{j+1}) \leq \xi_j + 2\epsilon_f -\zeta \nrm{}{\nabla \phi(x_j)}^2 .
}
Recalling that
\eqals{
	A = \max \left\{\frac{16 \sqrt{2}}{\sqrt{(c_2-c_1) (4-c_1-3 c_2)}},\frac{8}{\sqrt{c_1(1-c_2)}}\right\}, 
}
and by \eqref{ha} we have
\eqals{
	 \zeta \nrm{}{\nabla \phi(x_j)}^2 \geq ~ & \frac{c_1(1-c_2)}{16}  A^2  \epsilon_f\\
	\geq ~ & \frac{c_1(1-c_2)}{16}  \bbr{\frac{8}{\sqrt{c_1(1-c_2)}}}^2  \epsilon_f \\
	 = ~ & 4 \epsilon_f ,
}
and thus
\eqals{
	\phi(x_{j+1}) \leq \xi_j  -\frac{\zeta}{2} \nrm{}{\nabla \phi(x_j)}^2 .
}
Since $\phi$ is strongly convex by Assumption~\ref{ass_strong_convex}, we have
\eqals{
	\nrm{}{\nabla \phi(x_j)}^2 \geq 2 {m} (\phi(x_j) - \phi^*) \geq 2 {m}(\xi_j - \phi^*) \cm
}
thus we have
\eqals{
	\xi_{j+1} \leq \phi(x_{j+1}) \leq \xi_j  -\frac{\zeta}{2} \nrm{}{\nabla \phi(x_j)}^2 \leq \xi_j - {m}\zeta(\xi_j - \phi^*)
}
i.e.,
\eqals{
	\xi_{j+1} -\phi^* \leq (1 - {m}\zeta)(\xi_j - \phi^*) .
}

The relation above holds  if $j \in J$.  If $j \notin J$, all we can ascertain is that
\eqals{
	\xi_{j+1} \leq \xi_j .
}
By Corollary \ref{cor_lower_bound_cos_theta}, we have $\abs{[k-1]\cap J} \geq qk$, hence
\eqals{
	\xi_k - \phi^* \leq (1-{m}\zeta)^{qk} ~ (\xi_0 - \phi^*) = \rho^k (\phi(x_0) - \phi^*)
}
where $\rho = (1-{m}\zeta)^q$. Since $\phi(x_k) \leq \xi_k + 2\epsilon_f$, we have
\[
	\phi(x_k) - \phi^* \leq \rho^k (\phi(x_0) - \phi^*)	+ 2\epsilon_f.
\]
\end{proof}

The next result shows that the iterates generated by the algorithm enter the neighborhood ${\cal N}_1$ in a finite number of iterations.

\begin{thm}
\label{thm_finite_hitting_time_for_N1}
Suppose Assumptions \ref{ass_Lip_grad_bounded_below}, \ref{ass_bounded_noise} and \ref{ass_strong_convex} are satisfied. Let $\bst{x_k}$ be generated by Algorithm~\ref{algo1} using \eqref{long}. Let $\mathcal{N}_1$ and $K$ be defined as in Theorem~\ref{thm_r_linear_convergence_before_hitting_N1}. If  in addition we assume that $\max \bst{\epsilon_f, \epsilon_g} > 0$, then we have 
\eqals{
	K < +\infty
}
\end{thm}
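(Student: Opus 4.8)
The plan is to argue by contradiction, so suppose $K=+\infty$. Then no iterate ever enters $\mathcal{N}_1$, which is precisely the statement that $\nrm{}{\nabla \phi(x_k)} > \tau$ for every $k\in\mathbb{N}$, where $\tau \defeq \max\{A\sqrt{M\epsilon_f}/\beta_1,\, B\epsilon_g/\beta_1\}$ is the radius defining $\mathcal{N}_1$. Since $\max\{\epsilon_f,\epsilon_g\}>0$ and $A,B,\beta_1>0$, we have $\tau>0$. The key will be to derive two incompatible estimates for the optimality gap $\phi(x_k)-\phi^*$: a uniform \emph{lower} bound coming from the gradient never being small, and an \emph{upper} bound that decays toward $2\epsilon_f$ coming from the linear-convergence result.

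For the lower bound I would use that $\nabla\phi$ is $M$-Lipschitz (Assumption~\ref{ass_Lip_grad_bounded_below}): the standard descent-lemma estimate $\phi^* \le \phi\bpa{x-\tfrac1M\nabla\phi(x)} \le \phi(x)-\tfrac{1}{2M}\nrm{}{\nabla\phi(x)}^2$ yields $\phi(x_k)-\phi^* \ge \tfrac{1}{2M}\nrm{}{\nabla\phi(x_k)}^2 > \tfrac{\tau^2}{2M}$ for all $k$. For the upper bound, since $K=+\infty$ the hypothesis $k\le K-1$ of Theorem~\ref{thm_r_linear_convergence_before_hitting_N1} holds for every index, so $\phi(x_k)-\phi^* \le \rho^k(\phi(x_0)-\phi^*)+2\epsilon_f$ for all $k$, with $\rho\in(0,1)$. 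Combining the two displays and letting $k\to\infty$ (so $\rho^k\to0$) gives $\tfrac{\tau^2}{2M}\le 2\epsilon_f$.

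It then remains to contradict this by showing $\tfrac{\tau^2}{2M}>2\epsilon_f$, which is the one place where the specific constant $A$ in \eqref{eq_def_for_A_and_B_with_specific_deltas} does real work. If $\epsilon_f>0$, then $\tau \ge A\sqrt{M\epsilon_f}/\beta_1$ and, since $A$ is a maximum that always contains the term $8/\sqrt{c_1(1-c_2)}$, we have $A^2\ge 64/(c_1(1-c_2))$; hence $\tfrac{\tau^2}{2M}\ge \tfrac{A^2\epsilon_f}{2\beta_1^2}\ge \tfrac{32\epsilon_f}{c_1(1-c_2)\beta_1^2}>32\epsilon_f>2\epsilon_f$, using $c_1(1-c_2)\beta_1^2<1$ (each factor lies in $(0,1)$). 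If instead $\epsilon_f=0$, then $\epsilon_g>0$, so $\tau\ge B\epsilon_g/\beta_1>0$ gives $\tfrac{\tau^2}{2M}>0=2\epsilon_f$. In either case $\tfrac{\tau^2}{2M}>2\epsilon_f$, contradicting the inequality $\tfrac{\tau^2}{2M}\le 2\epsilon_f$ obtained above. Therefore the assumption $K=+\infty$ is untenable and $K<+\infty$. The main (and essentially only) obstacle is verifying this last margin inequality; everything else is a routine juxtaposition of the Lipschitz lower bound on the gap with the already-established linear-convergence upper bound.
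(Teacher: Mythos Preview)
Your argument is correct and follows essentially the same route as the paper's proof: assume $K=+\infty$, use Theorem~\ref{thm_r_linear_convergence_before_hitting_N1} to drive $\phi(x_k)-\phi^*$ arbitrarily close to $2\epsilon_f$, and combine this with the Lipschitz inequality $\nrm{}{\nabla\phi(x)}^2\le 2M(\phi(x)-\phi^*)$ to contradict the assumption that $\nrm{}{\nabla\phi(x_k)}$ never drops to the threshold $\tau$. The only cosmetic difference is that the paper phrases the contradiction as ``$\nrm{}{\nabla\phi(x_k)}$ eventually falls below $\tau$'' (using merely $A>2$ and $\beta_1<1$), whereas you phrase it as the incompatible pair $\tau^2/(2M)\le 2\epsilon_f$ versus $\tau^2/(2M)>2\epsilon_f$ (using the sharper bound $A\ge 8/\sqrt{c_1(1-c_2)}$); both are equivalent rearrangements of the same inequality and both handle the $\epsilon_f=0$ case via the $B\epsilon_g/\beta_1$ term.
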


\begin{proof}
Suppose, by the way of contradiction, that $K = +\infty$, i.e., that $x_k \notin {\cal N}_1$, for all $k$. Pick arbitrary $\delta > 0$, then by Theorem~\ref{thm_r_linear_convergence_before_hitting_N1} we have
\eqals{
	\phi(x_k) - \phi^* \leq \delta + 2\epsilon_f \cm
}
for sufficiently large $k$. On the other hand, by Assumption~\ref{ass_Lip_grad_bounded_below}, 
\eqals{
	\nrm{}{\nabla \phi(x)}^2 \leq 2{M}(\phi(x) - \phi^*) \cm \forall x \in \mb{R}^d.
}
Hence,
\eqals{
	\nrm{}{\nabla \phi(x_k)}^2 \leq 4 {M} \epsilon_f + 2 {M}\delta .
}
Choose $\delta$ sufficiently small such that
\eqals{
	\nrm{}{\nabla \phi(x_k)}^2 \leq 4 {M} \epsilon_f + 2 {M}\delta \leq \bbr{\max \Bigg\{ A  \frac{\sqrt{{M}\epsilon_f}}{\beta_1} \cm B  \frac{\epsilon_g}{\beta_1}  \Bigg\}}^2 ,
}
which is always possible since $A > 2$ and $\beta_1 \in (0, 1)$. Therefore, $x_k \in \mathcal{N}_1$ yielding a contradiction.
\end{proof}

The next result shows that after an iterate has entered the neighborhood ${\cal N}_1$, all subsequent iterates cannot stray too far away from the solution in the sense that their function values remain within a band of width $2 \epsilon_f$ of the {largest} function value obtained inside ${\cal N}_1$.

\begin{thm}
Suppose Assumptions \ref{ass_Lip_grad_bounded_below}, \ref{ass_bounded_noise} and \ref{ass_strong_convex} are satisfied. Let $\bst{x_k}$ be generated by Algorithm~\ref{algo1} with the choice \eqref{long}. Let $\mathcal{N}_1$ and $K$ be defined as in Theorem~\ref{thm_r_linear_convergence_before_hitting_N1}, and let
\eqals{
	\wh{\phi} = \max_{x \in \mathcal{N}_1} ~ \phi(x) ,
}
and 
\eqals{
	\mathcal{N}_2 = \bst{x|\phi(x) \leq \wh{\phi} + 2\epsilon_f} .
}
Then, 
\eqals{
	x_k \in \mathcal{N}_2 \cm \ \forall k \geq K .
}
\end{thm}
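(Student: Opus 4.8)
The plan is to exploit the two-sided bound $\xi_k \le \phi(x_k) \le \xi_k + 2\epsilon_f$ from Theorem~\ref{thm_min_phi_k}, together with the monotonicity of the running minimum $\xi_k = \min_{i \in [k]} \phi(x_i)$, to trap the function values of all iterates beyond $K$ within a band of width $2\epsilon_f$ above $\wh\phi$. No fresh use of the BFGS machinery (line search, Hessian update, good-iterate counting) is needed: all of it has already been distilled into the monotonicity and the band inequality of Theorem~\ref{thm_min_phi_k}.

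Concretely, I would proceed in three short steps. First, since $x_K \in \mathcal{N}_1$ and $\wh\phi = \max_{x \in \mathcal{N}_1} \phi(x)$, the first iterate to enter $\mathcal{N}_1$ satisfies $\phi(x_K) \le \wh\phi$, whence $\xi_K = \min_{i \in [K]} \phi(x_i) \le \phi(x_K) \le \wh\phi$. Second, because $\bst{\xi_k}$ is non-increasing by Theorem~\ref{thm_min_phi_k}, we have $\xi_k \le \xi_K \le \wh\phi$ for every $k \ge K$. Third, the upper half of the bound in Theorem~\ref{thm_min_phi_k} gives $\phi(x_k) \le \xi_k + 2\epsilon_f \le \wh\phi + 2\epsilon_f$, which is precisely the assertion $x_k \in \mathcal{N}_2$.

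The main point requiring care --- and essentially the only nontrivial one --- is the well-definedness of $\wh\phi$, i.e.\ that the maximum of $\phi$ over $\mathcal{N}_1$ is attained and finite. This is where Assumption~\ref{ass_strong_convex} enters: $m$-strong convexity yields $\nrm{}{\nabla\phi(x)} \ge m \nrm{}{x-x^*}$ (with $x^*$ the unique minimizer), so the sublevel set $\mathcal{N}_1$ is contained in a ball about $x^*$, and continuity of $\nabla\phi$ makes it closed; thus $\mathcal{N}_1$ is compact and $\wh\phi < +\infty$ is genuinely attained. I would also remark that the conclusion is vacuous when $K = +\infty$, but Theorem~\ref{thm_finite_hitting_time_for_N1} ensures $K < +\infty$ as soon as $\max\bst{\epsilon_f,\epsilon_g} > 0$, so the statement carries content in precisely the case of interest.
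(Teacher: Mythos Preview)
Your proposal is correct and follows essentially the same argument as the paper: both use the compactness of $\mathcal{N}_1$ (from strong convexity and continuity) to justify $\wh\phi$, then chain $\xi_k \le \xi_K \le \phi(x_K) \le \wh\phi$ with the band inequality $\phi(x_k)\le \xi_k + 2\epsilon_f$ from Theorem~\ref{thm_min_phi_k}. Your treatment is slightly more explicit in justifying compactness and in flagging the vacuous case $K=+\infty$, but the substance is identical.
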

\begin{proof}
Since $\phi$ is twice continuously differentiable and strongly convex, $\mathcal{N}_1$ defined in Theorem \ref{thm_r_linear_convergence_before_hitting_N1} is a compact set, so $\wh{\phi}$ is well-defined.
By Theorem \ref{thm_finite_hitting_time_for_N1}, $K < \infty$. Choose any $k \geq K$. Since $x_K \in \mathcal{N}_1$ and $k \geq K$, we have
\eqals{
	\xi_k \leq \xi_K \leq \phi(x_K) \leq \wh{\phi} .
}
Recalling Theorem~\ref{thm_min_phi_k},  
\eqals{
	\phi(x_k) \leq \xi_k + 2 \epsilon_f \leq \wh{\phi} + 2\epsilon_f
}
which shows that $x_k \in \mathcal{N}_2$.
\end{proof}

Finally, we have the following result regarding the lengthening operation. It shows that for all ``good iterates"  that are sufficiently away from ${\cal N}_1$ lengthening is not necessary. 
\begin{thm}
\label{thm_lengthening_for_good_iterates}
Suppose Assumptions \ref{ass_Lip_grad_bounded_below}, \ref{ass_bounded_noise} and \ref{ass_strong_convex} are satisfied. Let $\bst{x_k}$ be generated by Algorithm \ref{algo1} with lengthening parameter $l$ satisfying \eqref{long}. Let $J$ be defined as in Corollary~\ref{cor_lower_bound_cos_theta}, and $A,B$ be defined as \eqref{eq_def_for_A_and_B_with_specific_deltas}. 
If $k \in J$ and
\eqals{
	\nrm{}{\nabla \phi(x_k)} \geq \max \Bigg\{ A  \frac{\sqrt{{M}\epsilon_f}}{\beta_1} \cm B \frac{\epsilon_g}{\beta_1} \cm \frac{4 l {M}}{(1-c_2) \beta_1}  \Bigg\} ,
}
then  $\nrm{}{\alpha_k p_k} \geq l$, meaning that step~\ref{algo_line_lengthening} of Algorithm~\ref{algo1} is not executed.
\end{thm}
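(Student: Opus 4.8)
The plan is to extract a lower bound on the step length $\alpha_k\|p_k\|$ directly from the Wolfe condition satisfied by the noisy pair $(f,g)$, and then to show that the threshold on $\|\nabla\phi(x_k)\|$ forces this lower bound to exceed $l$.

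First I would record the facts the hypotheses hand us. Since $k\in J$, Corollary~\ref{cor_lower_bound_cos_theta} gives $\cos\theta_k\geq\beta_1$. Since the first two entries in the stated max coincide with the threshold of Corollary~\ref{cor_decrease_per_iter_for_specfic_deltas}, that corollary guarantees a stepsize $\alpha_k$ satisfying the Armijo--Wolfe conditions for $(f,g)$ with parameters $(c_1,c_2)$, so the line search in Algorithm~\ref{algo1} succeeds and returns such an $\alpha_k>0$; the bound I am about to derive holds for \emph{any} such step. Because $B\geq 8/(1-c_2)$ forces $\|\nabla\phi(x_k)\|>2\epsilon_g$, Assumption~\ref{ass_bounded_noise} also yields $\|g(x_k)\|\geq\tfrac12\|\nabla\phi(x_k)\|$, exactly as in \eqref{makes}.

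Next I would convert the Wolfe condition $p_k^Tg(x_k+\alpha_kp_k)\geq c_2\,p_k^Tg(x_k)$ into a lower bound on the step. Rearranging gives $p_k^T[g(x_k+\alpha_kp_k)-g(x_k)]\geq(1-c_2)\|p_k\|\|g(x_k)\|\cos\theta_k$, while bounding the left side using $g=\nabla\phi+e$, the $M$-Lipschitz continuity of $\nabla\phi$ (Assumption~\ref{ass_Lip_grad_bounded_below}), and $\|e\|\leq\epsilon_g$ (Assumption~\ref{ass_bounded_noise}) gives $p_k^T[g(x_k+\alpha_kp_k)-g(x_k)]\leq\|p_k\|\,(M\alpha_k\|p_k\|+2\epsilon_g)$. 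Dividing by $\|p_k\|$ and rearranging produces
\[
\alpha_k\|p_k\|\geq\frac{(1-c_2)\|g(x_k)\|\cos\theta_k-2\epsilon_g}{M}.
\]
Inserting $\cos\theta_k\geq\beta_1$ and $\|g(x_k)\|\geq\tfrac12\|\nabla\phi(x_k)\|$ reduces the task to showing that $X\defeq\tfrac{(1-c_2)\beta_1}{2}\|\nabla\phi(x_k)\|$ satisfies $X\geq Ml+2\epsilon_g$.

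The one point that needs care is that this target is a \emph{sum} of $Ml$ and $2\epsilon_g$, whereas the hypothesis bounds $\|\nabla\phi(x_k)\|$ by a \emph{max}; this is precisely why the third entry $4lM/[(1-c_2)\beta_1]$ and the entry $B\epsilon_g/\beta_1$ both need to appear. I would resolve this by using each entry to obtain twice what is needed: the third threshold gives $X\geq 2Ml$, and the bound $\|\nabla\phi(x_k)\|\geq(8/(1-c_2))\,\epsilon_g/\beta_1$ (which follows from $B\geq 8/(1-c_2)$) gives $X\geq4\epsilon_g$. Averaging, $X=\tfrac12X+\tfrac12X\geq Ml+2\epsilon_g$, whence $\alpha_k\|p_k\|\geq(X-2\epsilon_g)/M\geq l$. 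Thus $\|\alpha_kp_k\|\geq l$ and step~\ref{algo_line_lengthening} is not executed. The remaining work is routine algebra; the only genuine subtlety is the max-to-sum conversion just described, which is what dictates the form of the threshold in the theorem.
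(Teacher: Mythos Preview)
Your argument is correct, but it differs from the paper's in one key place: the source of the step-length lower bound. You work directly from the Wolfe condition on the \emph{noisy} pair $(f,g)$, which costs you a $2\epsilon_g$ slack and forces the max-to-sum ``averaging'' trick you describe. The paper instead invokes the full conclusion of Corollary~\ref{cor_decrease_per_iter_for_specfic_deltas}: the same $\alpha_k$ satisfies the Wolfe condition for the \emph{true} pair $(\phi,\nabla\phi)$ with parameter $(1+c_2)/2$. That gives, by the usual Lipschitz argument,
\[
\alpha_k\|p_k\|\;\ge\;\frac{1-c_2}{2M}\,\|\nabla\phi(x_k)\|\cos\wt\theta_k\;\ge\;\frac{(1-c_2)\beta_1}{4M}\,\|\nabla\phi(x_k)\|,
\]
so the third threshold alone yields $\|\alpha_kp_k\|\ge l$ with no $\epsilon_g$ term to absorb. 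The paper's route is shorter and explains the factor $4$ in the threshold as coming cleanly from $\cos\wt\theta_k\ge\beta_1/2$ together with the transferred Wolfe parameter; your route avoids appealing to that transfer (using only the existence half of the corollary) at the price of the extra bookkeeping.
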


\begin{proof}
Since $k \in J$ and
\eqals{
	\nrm{}{\nabla \phi(x_k)} \geq \max \Bigg\{ A  \frac{\sqrt{{M}\epsilon_f}}{\beta_1} \cm B  \frac{\epsilon_g}{\beta_1} \Bigg\} ,
}
by Theorem~\ref{thm_decrease_per_iter} and Corollary~\ref{cor_decrease_per_iter_for_specfic_deltas} we know that the stepsize $\alpha_k$ satisfies 
\eqals{
	\phi(x_k + \alpha_k p_k) & \leq \phi(x_k) + \frac{c_1}{2} \alpha_k p_k^T \nabla \phi(x_k) \\
	p_k^T \nabla \phi(x_k + \alpha_k p_k) & \geq \frac{1+c_2}{2} p_k^T \nabla \phi(x_k).
}
Thus we have a lower bound on $\alpha_k$:
\eqals{
	\alpha_k \geq -\frac{1-c_2}{2{M}} \frac{\nabla \phi(x_k)^T p_k}{\nrm{}{p_k}^2}.
}
Then we have
\eqals{
	 \nrm{}{\alpha_k p_k} 
	 \geq ~ &\frac{1-c_2}{2{M}} {\nrm{}{\nabla \phi(x_k)}} \cos \wt{\theta}_k \\
	 \geq ~ &\frac{(1-c_2)\beta_1}{4{M}} {\nrm{}{\nabla \phi(x_k)}} .
}
Since 
\eqals{
	\nrm{}{\nabla \phi(x_k)} \geq \frac{4 l {M}}{(1-c_2) \beta_1} ,
}
we have
\[
	\nrm{}{s_k}  \geq  \frac{(1-c_2)\beta_1}{4{M}} {\nrm{}{\nabla \phi(x_k)}} \geq l .
\] 

\end{proof}

\section{Numerical Experiments}   \label{sec:numerical}
We implemented Algorithm \ref{algo1} and tested it on a $4$-dimensional quadratic function of the form
\eqals{
	\phi(x) = \frac{1}{2} x^T T x ,
}
where the eigenvalues of $T$ are
$
	\lambda(T) = \bst{10^{-2}, 1, 10^2, 10^4}. 
$
Thus, the strong convexity parameter is $m=10^{-2}$ and the Lipschitz constant $M=10^4$.

The noise in the function $\epsilon(x)$ was computed by uniformly sampling from the interval $[-\epsilon_f, \epsilon_f]$, and the noise in the gradient $e(x)$ by uniformly sampling from the closed ball 
$\nrm{2}{x} \leq \epsilon_g$. The maximum noise (or error) level was chosen as $\epsilon_g = \epsilon_f = 1$. We computed the lengthening parameter $l$ in Algorithm~\ref{algo1} as $l = 4\epsilon_g/m$, which is twice as large as the lower bound stipulated in Lemma \ref{lemma_effect_of_lengthening}. 

The line search implements the standard bisection Armijo-Wolfe  search with parameters $c_1 = 0.01 , c_2 = 0.5$. If the line
search is unable to find an acceptable stepsize within 
64 iterations, its is considered to have failed, and we set $\alpha_k = 0$. Algorithm~\ref{algo1} terminates if: i)  $\nrm{}{g(x_k)} \leq  10^{-5}$; or b) 30 consecutive line search failures occur;
c) or if Algorithm~\ref{algo1} reaches the limit of {60} iterations.
The initial iterate is $x_0 = 10^5 \cdot (1,1,1,1)^T$ for which $\nrm{}{\nabla \phi(x_0)} \approx 10^9$. 

Figures \ref{fig_function} and \ref{fig_gradient_norm} plot the results of $20$ runs of Algorithm \ref{algo1}, all initialized at the vector $x_0$ given above. In both figures, we indicate the first iteration (in all runs) when the differencing interval was lengthened, i.e., when step~\ref{algo_line_lengthening} of  Algorithm \ref{algo1} was executed. We observe from Figure~\ref{fig_function} that  Algorithm \ref{algo1} quickly drives the optimality gap $\phi(x_k) - \phi^*$ to the noise level. Figure~\ref{fig_cond_number} plots the log of the condition number of the matrix $H_k^{1/2} \nabla^2 \phi(x_k) H_k^{1/2}$ against the  iteration number $k$. For this small dimensional quadratic, the BFGS approximation converges to the true Hessian when errors are not present.  Figure~\ref{fig_cond_number} shows that the Hessian approximation does not deteriorate after the iterates enter the region where noise dominates, illustrating the benefits of the lengthening strategy.


\begin{figure}[htp!]
	\centering
	\includegraphics[width=.6\textwidth]{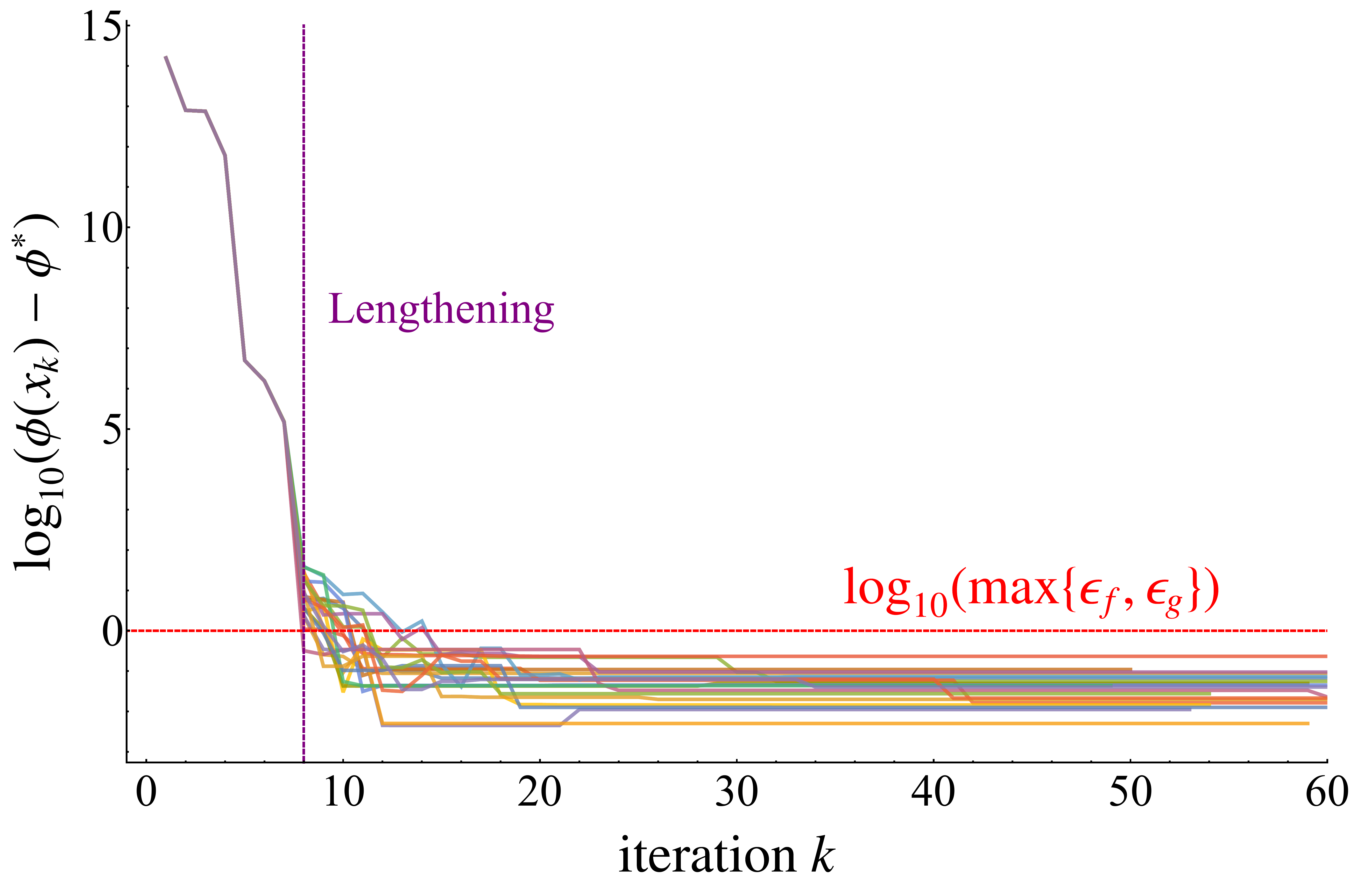}
	\caption{Results of 20 runs of Algorithm \ref{algo1}. The graph plots the log of the optimality gap for the true function, $\log_{10}\bpa{\phi(x_k) - \phi^*}$, against the iteration number $k$. The horizontal red dashed line corresponds to the noise level $\log_{10}~\max\bst{\epsilon_g, \epsilon_f}=0$. The vertical purple dashed line marks the first iteration at which lengthening is performed ($k = 8$). 
	}
	\label{fig_function}
\end{figure}

\begin{figure}[htp!]
	\centering
	\includegraphics[width=.6\textwidth]{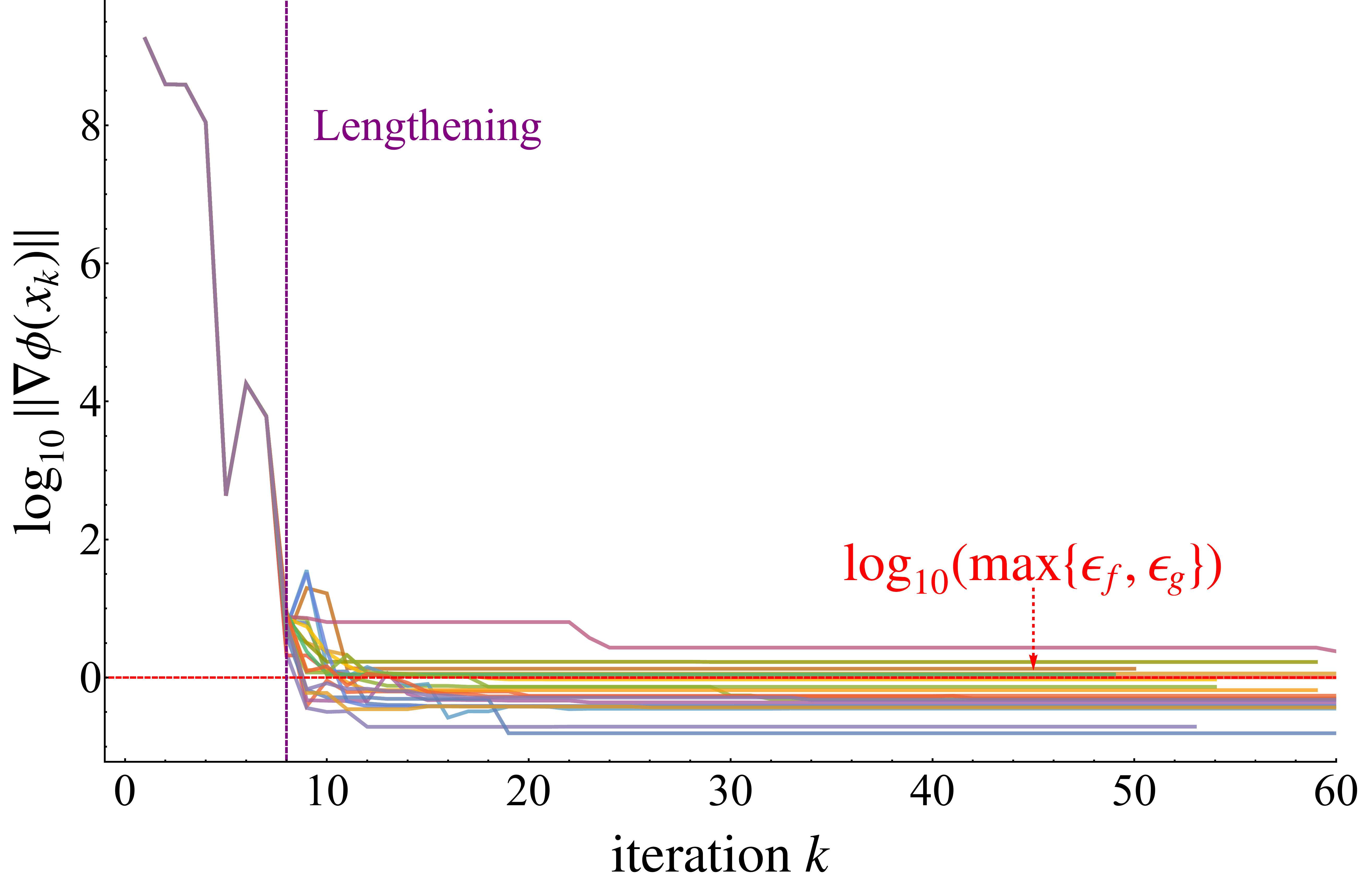}
	\caption{Log of the norm of true gradient $\log_{10} \nrm{}{\nabla \phi(x_k)}$ against iteration $k$ for 20 runs of Algorithm~\ref{algo1}. The horizontal red dashed line corresponds to the noise level, and the vertical purple dashed line corresponds to the first iteration at which lengthening is performed.}
	\label{fig_gradient_norm}
\end{figure}

\begin{figure}[htp!]
	\centering
	\includegraphics[width=.6\textwidth]{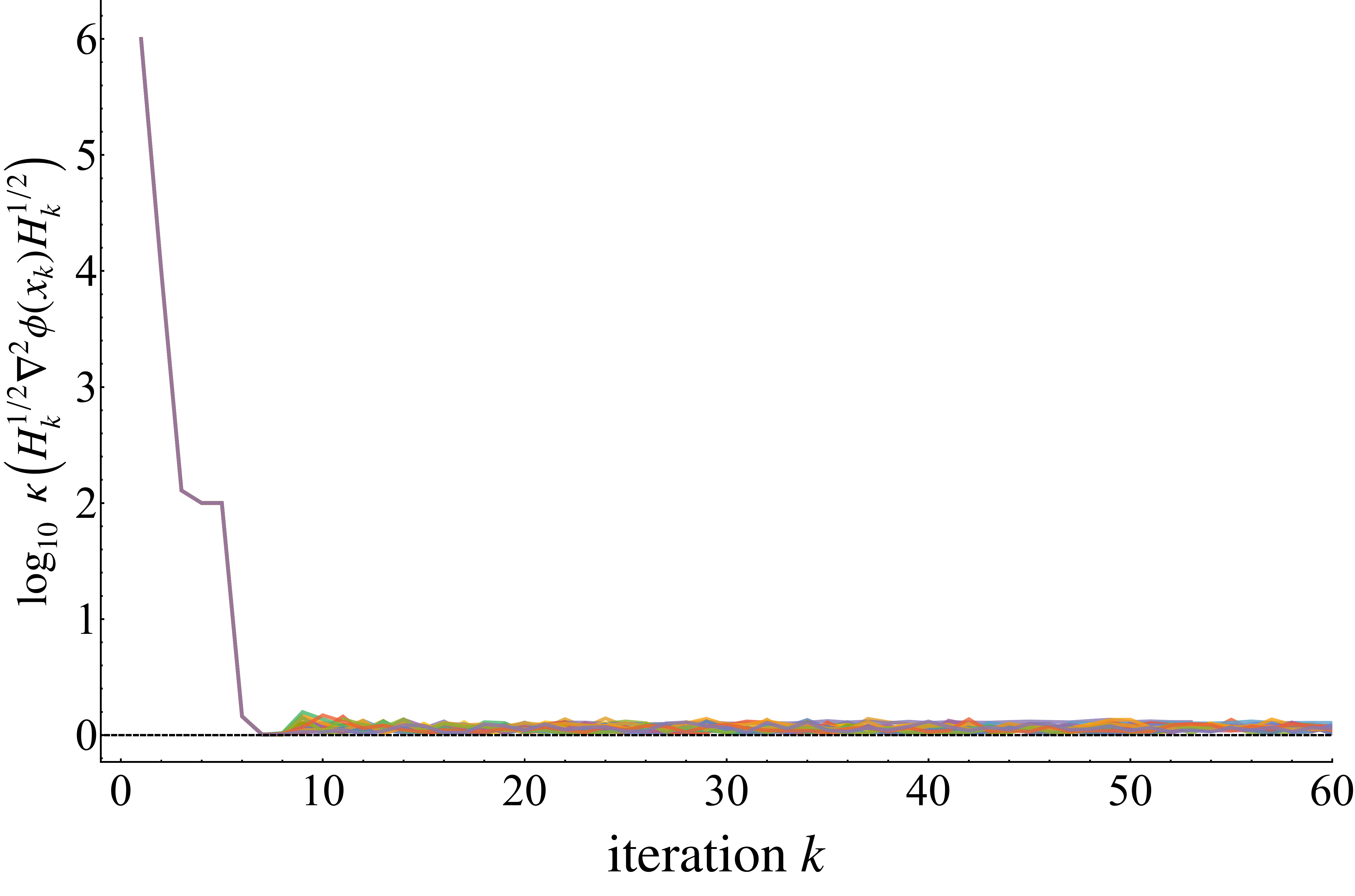}
	\caption{Log of the condition number of $H_k^{1/2} \nabla^2 \phi(x_k) H_k^{1/2}$ against iteration $k$. Note that after the iteration reaches the noise level, the Hessian approximation remains accurate.}
	\label{fig_cond_number}
\end{figure}


\newpage
\section{Final Remarks}  \label{sec:final}
In this paper, we analyzed the BFGS method when the function and gradient evaluations contain errors. We do not assume that errors diminish as the iterates converge to the solution, or that the user is able to control the magnitude of the errors at will; instead we consider the case when errors are always present. Because of this, our analysis focuses on global linear convergence to a neighborhood of the solution, and not on conditions that ensure superlinear convergence --- something that would require errors to diminish very rapidly.

In the regime where the gradient $\| \nabla \phi(x) \| $ of the objective function is sufficiently larger than the errors, we would hope for the BFGS method to perform well. However, even in that setting, errors can contaminate the Hessian update, and the line {search} can give conflicting information. Nevertheless, we show that a simple modification of the BFGS method inherits the good performance of the classical method (without errors). In particular, we  extend one of the hallmark results of BFGS, namely Theorem~2.1 in \cite{ByNoTool}, which shows that under mild conditions a large fraction of the BFGS iterates are good iterates, meaning that they do not tend to be orthogonal to the gradient. We also establish conditions under which an Armijo-Wolfe line search on the noisy function yields sufficient decrease in the true objective function. These two results are then combined to establish global convergence. 

The modification of the BFGS method proposed here consists of ensuring that the length of the interval used to compute gradient differences is large enough so that differencing is stable. Specifically, if the line search indicates that the size of the latest step is not large enough compared to the size the error, then the corrections pairs $(s_k, y_k)$ used to update the BFGS matrix are modified. Instead of using $s_k$ as the differencing interval, we lengthen it  and compute gradient differences based on the end points of the elongated interval. This allows us to establish convergence results to a neighborhood of the solution where progress is not possible, along the lines of Nedic and Bertsekas \cite{nedic2001convergence}. An additional feature of our modified BFGS method is that,  when the iterates enter the region where errors dominate, the Hessian approximation does not get corrupted.

The numerical results presented here are designed to verify only the behavior predicted by the theory. In our implementation of Algorithm~\ref{algo1}, we assume that the size of the errors and the strong convexity parameter are known, as this helps us determine the size of the lengthening parameter $l$. In a separate paper, we will consider a practical implementation of our algorithm that estimates $l$ adaptively,  that is able to deal with nonconvexity, and that provides a limited memory version of the algorithm.  We believe that the theory presented in this paper will be  useful in the design of such a practical algorithm.

\section{Appendix A}

\begin{proof}[Proof of Lemma \ref{lemma_mu_L_smooth}]~~~

\medskip\noindent
\textbf{Part I.}  We first show that if that $y=Hs$ with  $\lambda(H) \subseteq [\mu, L]$ then \eqref{exclaim} holds.
Clearly,
\eqals{
	\lambda \bpa{H - \frac{L + \mu}{2} I} \subseteq \bbr{-\frac{L-\mu}{2}, \frac{L-\mu}{2}} .
}
Since $H - (L+\mu)I/2$ is symmetric, we have
\eqals{
	\nrm{}{H - \frac{L + \mu}{2} I} \leq  \frac{L-\mu}{2} .
}
Since
\eqals{
	y - \frac{L+\mu}{2} s = \bpa{H - \frac{L + \mu}{2} I} s \cm
}
we conclude that
\[
	\nrm{}{y - \frac{L+\mu}{2} s} = \nrm{}{\bpa{H - \frac{L + \mu}{2} I} s} 
	\leq \frac{L-\mu}{2} \nrm{}{s} .
\]

\medskip\noindent
\textbf{Part II.} We prove the converse by construction. To this end, we make the following \emph{claim}. If  $u, v \in \mb{R}^d$, are such that $\nrm{}{u} = \nrm{}{v} = 1$, then there exists a symmetric real matrix $Q$ such that $Q u = v$ and $\lambda(Q) \subseteq \bst{-1, 1}$. To prove this, we first note that if $u = - v$ then we can choose $Q = - I$. Otherwise, let
\eqals{
	e = \frac{u + v}{\nrm{}{u + v}} .
}
Then, a simple calculation shows that 
\eqal{
	Q = 2 e e^T - I
}
satisfies $Q u = v$ and $Q^T = Q$. Since $\lambda(2 e e^T) = \bst{0, 2}$, we have $\lambda(Q) = \bst{-1,1}$, showing that our claim is true. 

%
Now, to prove Part II, we assume that \eqref{exclaim} holds. If 
\eqals{
	y - \frac{L+\mu}{2}s = 0 ,
}
then it follows immediately that  $y=Hs$ with  $\lambda(H) \subseteq [\mu, L]$.
Otherwise, 
define
\eqals{
	v = \frac{{y - \frac{L+\mu}{2}s}}{\nrm{}{y - \frac{L+\mu}{2}s}}
\quad\mbox{and} \quad
	u = \frac{s}{\nrm{}{s}} .
}
We have shown above that since $v, u$ are unit vectors, there exists a symmetric real matrix $Q \in \mb{S}_{d\times d}$ such that $v = Q u$ and $\lambda(Q) \subseteq \bst{-1,1}$, i.e.,
\eqals{
	Q \frac{s}{\nrm{}{s}} = \frac{{y - \frac{L+\mu}{2}s}}{\nrm{}{y - \frac{L+\mu}{2}s}} .
}
Hence, we have
\eqals{
	y = H s ,
}
where
\eqals{
	H = {\frac{L+\mu}{2} I + \frac{\nrm{}{y - \frac{L+\mu}{2}s}}{\nrm{}{s}}Q} .
}
Since we assume that 
\eqals{
	\frac{\nrm{}{y - \frac{L+\mu}{2}s}}{\nrm{}{s}} \leq \frac{L-\mu}{2} ,
}
and $\lambda(Q) \subseteq \bst{-1, 1}$, we conclude that
\[
	\lambda(H) \subseteq [\mu, L].
\]
\end{proof}

\newpage

\bibliographystyle{siamplain}
\bibliography{references}

\end{document}